\documentclass[a4paper,10pt]{article}

\usepackage{graphics}
\usepackage{color}
\usepackage{xcolor}
\usepackage{comment}
\usepackage{amsmath,amsthm,mathtools}
\usepackage{amsfonts}
\usepackage{mathrsfs}
\usepackage{color}
\usepackage{algorithm}
\usepackage{algorithmic}
\usepackage{siunitx}
\usepackage{multirow}
\usepackage{stackrel}
\usepackage{booktabs}
\usepackage{tabularx}
\usepackage{arydshln}
\usepackage{cite}
\usepackage{url}
\usepackage[demo]{graphicx}
\usepackage{listings}

\lstset{ %
language=Matlab,                
basicstyle=
      \small,           
backgroundcolor=\color{white},      
showspaces=false,               
showstringspaces=false,         
showtabs=false,                 
frame = single,
rulecolor=\color{black},        
tabsize=2,                      
breaklines=true,                
breakatwhitespace=false,        
keywordstyle=\color{blue},          
commentstyle=\color{blue},       
stringstyle=\color{mauve},         
escapeinside={\%*}{*)},            
morekeywords={*,nomefun}               
}

\theoremstyle{definition}
\numberwithin{equation}{section}
\newtheorem{Theorem}{Theorem}
\newtheorem{Example}{Example}

\newtheorem{Lemma}{Lemma}

\newtheorem{Remark}{Remark}

\newcommand{\Qa}{\text{$Q_{3,+}$}}
\newcommand{\Qb}{\text{$Q_{4,+}$}}
\newcommand{\Qainv}{\text{$Q_{3,+}^{-1}$}}
\newcommand{\Qbinv}{\text{$Q_{4,+}^{-1}$}}
\newcommand{\R}{\mathbb{R}}
\usepackage[top=3.0cm,bottom=3.0cm,left=3.0cm,right=3.0cm]{geometry}
\usepackage{lineno}

\begin{document}
\title{Some preconditioning techniques for a class of double saddle point problems}
\author{
Fariba Bakrani Balani\footnote{Department of Applied Mathematics, Faculty of Mathematical Sciences, Shahid Beheshti University, Tehran, Iran.  \texttt{E-mail}: farbakrani@gmail.com}, \
Luca Bergamaschi\footnote{Department of Civil Environmental and Architectural Engineering, University of Padua, Italy.  
\texttt{E-mail}: luca.bergamaschi@unipd.it},\ \
\'Angeles Mart\'inez\footnote{Department of Mathematics and Geosciences, University of Trieste, Trieste, Italy.  
\texttt{E-mail}: amartinez@units.it}, \ and \\
Masoud Hajarian\footnote{Department of Applied Mathematics, Faculty of Mathematical Sciences, Shahid Beheshti University, Tehran, Iran.  \texttt{E-mail}: m\_hajarian@sbu.ac.ir , 
}}
\date{}
\maketitle
\vskip -28pt
\begin{abstract}
\normalsize
In this paper, we describe and analyze the spectral properties of a number of exact block preconditioners for a class of double saddle point problems. Among all these, we consider an inexact version of a block triangular preconditioner providing extremely
	fast convergence of the FGMRES method. We develop a spectral analysis of the preconditioned matrix 
	showing
that the complex eigenvalues lie in a circle of center $(1,0)$
and radius 1, while the real eigenvalues are described in terms of the roots of a third order polynomial with real coefficients.
Numerical examples are reported to illustrate the efficiency of inexact versions of the proposed preconditioners, and to verify
the theoretical bounds.
\end{abstract}

\medskip

\noindent
\textbf{AMS classification:} 65F10, 65F50, 56F08.

\smallskip

\noindent
\textbf{Keywords}: Double saddle point problems. Preconditioning. Krylov subspace methods.

\medskip

\section{Introduction\label{Asec}}
This paper is concerned with a number of block preconditioners for the numerical solution of large and sparse linear system of equations of \textit{double saddle-point} type of the form
\begin{equation}\label{Eq1}
\mathcal{A}w\equiv\begin{pmatrix}
A&B^{T}&0\\B&0&C^{T}\\0&C&0\end{pmatrix}\begin{pmatrix}x\\y\\z  \end{pmatrix}=\begin{pmatrix}f\\g
\\ h\end{pmatrix}\equiv b,
\end{equation}
where $A\in \mathbb{R}^{n \times n}$ is a symmetric positive definite matrix, $B \in \mathbb{R}^{m \times n}$ and $C \in \mathbb{R}^{l \times m}$ have full row rank, $f \in \mathbb{R}^{n}$, $g \in \mathbb{R}^{m}$ and $h \in \mathbb{R}^{l}$ are given vectors. Such linear systems arise in a number of scientific applications including constrained least squares problems \cite{Yuan}, constrained quadratic programming \cite{Han}, magma-mantle dynamics \cite{Rhebergen}, to mention a few; see, e.g. \cite{Chen, Monk, Cai}. Similar block structures
arise e.g. in liquid crystal director modeling or in the coupled Stokes-Darcy problem, and the preconditioning of such linear systems has been considered in \cite{ChenRen, Szyld, Benzi2018, BeikBenzi2022}.

Obviously, the matrix of system \eqref{Eq1} is symmetric and can be considered as a $2 \times 2$ block matrix \cite{Cao}. 
Due to the fact that these saddle point matrices are typically large and sparse, their iterative solution is recommended e.g. by Krylov subspace iterative methods~\cite{Simoncini1}. In order to improve the efficiency of iterative methods, some preconditioning techniques are employed.

To solve iteratively the linear system \eqref{Eq1}, a number of preconditioning methods have been investigated and studied in the literature. In \cite{Huang1}, Huang developed the block diagonal preconditioner $\mathcal{P}_{D}$ and its inexact version $\widehat{\mathcal{P}}_{D}$ which are of the forms
\begin{equation}\label{Eq2}
\mathcal{P}_{D}=\begin{pmatrix}
A&0&0\\0&S&0\\0&0& X \end{pmatrix}, \qquad 
\widehat{\mathcal{P}}_{D}=\begin{pmatrix}
\widehat{A}&0&0\\0&\widehat{S}&0\\0&0& \widehat X \end{pmatrix},
\end{equation}
where $S = BA^{-1}B^{T}, \ X = C S^{-1} C^T$ and  $\widehat{A}, \widehat{S}$ and $\widehat X$
are symmetric positive definite approximations of $A, S$, and $X$, respectively.  Exact and inexact versions of the block diagonal preconditioner $\widehat{\mathcal{P}}_{D}$ have been analyzed in \cite{Bradley}.
Cao \cite{Cao} considered the equivalent linear system 
\begin{equation}\label{Eq3}
\mathcal{A}w\equiv\begin{pmatrix}
A&B^{T}&0\\-B&0&-C^{T}\\0&C&0\end{pmatrix}\begin{pmatrix}x\\y\\z  \end{pmatrix}=\begin{pmatrix}f\\-g
\\ h\end{pmatrix}\equiv b,
\end{equation}
and proposed the shift-splitting iteration of the form
\begin{equation}\label{Eq4}
\frac{1}{2}(\alpha I+\mathcal{A})w^{(k+1)}=\frac{1}{2}(\alpha I-\mathcal{A})w^{(k)}+b,
\end{equation}
which leads to the preconditioner 
\begin{equation}\label{Eq5}
\mathcal{P}_{SS}=\frac{1}{2}\begin{pmatrix}
\alpha I+A&B^{T}&0\\-B&\alpha I&-C^{T}\\0&C&\alpha I\end{pmatrix},
\end{equation}
where $\alpha$ is a positive constant and $I$ is the identity matrix of appropriate size.
In addition, a relaxed version of the shift-splitting preconditioner has been considered by dropping the shift parameter in 
the (1,1) block of $\mathcal{P}_{SS}$.  In \cite{Balani-et-al} two block preconditioners  are proposed, and the spectral distributions of their inexact versions are described.

In \cite{Xie}, three exact block preconditioners for solving \eqref{Eq1} have been introduced and analyzed which are defined as 
\begin{equation}\label{Eq6}
\mathcal{P}_{1}=\begin{pmatrix}A&0&0\\B&-S&C^{T}\\0&0&-X\end{pmatrix},\quad
\mathcal{P}_{2}=\begin{pmatrix}A&0&0\\B&-S&C^{T}\\0&0& X\end{pmatrix},\quad
\mathcal{P}_{3}=\begin{pmatrix}A&B^{T}&0\\B&-S&0\\0&0&- X \end{pmatrix}.
\end{equation} 
Moreover, it is shown that the preconditioned matrices corresponding to the above preconditioners only have at most three distinct eigenvalues.  
More recently, Wang and Li \cite{Wang} have proposed an exact and inexact parameterized block symmetric positive definite preconditioner for solving the double saddle point problem \eqref{Eq1}.

Enlightened by the type $\mathcal{P}$ preconditioners  in \eqref{Eq6}, we describe several other block preconditioning approaches to be employed within a Krylov subspace methods for the solution of linear system of equations \eqref{Eq1}, 
\begin{equation}\label{Eq7}
\mathcal{Q}_{1}=\begin{pmatrix}A&B^{T}&0\\0&-S&0\\0&0& X \end{pmatrix},\quad
\mathcal{Q}_{2}=\begin{pmatrix}A&B^{T}&0\\0&S&C^{T}\\0&0&- X \end{pmatrix},\quad
\mathcal{Q}_{3}=\begin{pmatrix}A&B^{T}&0\\0&-S&C^{T}\\0&0&\pm  X \end{pmatrix},
\end{equation} 
and the block preconditioners of the forms
\begin{equation}\label{Eq8}
\mathcal{Q}_{4}=\begin{pmatrix}A&B^{T}&0\\B&0&0\\0&C&\pm  X \end{pmatrix},\quad
\mathcal{Q}_{5}=\begin{pmatrix}A&B^{T}&0\\B&0&0\\0&0&  X \end{pmatrix}.
\end{equation} 

We analyze the spectral
distribution of the corresponding preconditioned matrices which in all cases have at most three distinct eigenvalues thus
guaranteeing the finite termination of e.g. the GMRES  iterative method. In realistic problems  the proposed
preconditioners can not be used exactly since they require (for their application)
\begin{enumerate}
\item  Solution  of a system with $A$, \\[-1.8em]
\item Explicit computation of $S = B A^{-1} B^T$, \\[-1.8em]
\item Solution of a system with $S$, \\[-1.8em]
\item Explicit computation of $X = C S^{-1} C^T$, \\[-1.8em]
\item Solution of a system with $X$.
\end{enumerate}
Particularly, steps 2. and 4. require inversion of the (possibly sparse)  matrices $A$ and $S$. Practical application of the described preconditioners is then subjected to approximation of matrices $A, S$ and $X$ with $\widehat A, \widehat S$ and $\widehat X$, respectively.

To measure the effects of such approximation on the  spectral properties of the preconditioned matrix we considered the block
triangular preconditioner $\mathcal{Q}_3$ (with the plus sign in (3,3) block), and give bound on the complex and real eigenvalues
in terms of the (real and positive) eigenvalues of $\widehat A^{-1} A$,  $\widehat S^{-1} S$,  $\widehat X^{-1} X$.

The outline of this work is described as follows. In section \ref{Bsec}, we derive and analyze some exact
block preconditioners for solving double saddle point problem \eqref{Eq1}.
In section \ref{Dsec}, we test the inexact versions of the proposed preconditioners on a test case in combination
with the FGMRES iterative method. We then focus in section \ref{Csec} on the spectral analysis of the block preconditioner
which revealed the most efficient. Bound on truly complex as well as on real eigenvalues are developed for this
preconditioner and compared with the actual spectral distribution.
Some further results about the real eigenvalues of a simplified version of the triangular preconditioner are developed and 
tested in section \ref{Fsec}.
Finally, we state some conclusions in section \ref{Esec}.
\section{Block preconditioners and eigenvalue analysis}\label{Bsec}
Let us consider the preconditioners in (\ref{Eq7}) and (\ref{Eq8}).
We observe that these preconditioners are nonsingular since $A$ is symmetric positive definite and both $B$ and $C$ have full row rank. We know that the eigenvalues of $\mathcal{A}\mathcal{Q}^{-1}$ and 
$\mathcal{Q}^{-1}\mathcal{A}$ (for general preconditioner $\mathcal{Q}$) are equal, so that spectral results can be given in terms of any of the two matrices.
In the following the eigenvalues of the preconditioned matrices corresponding to the proposed preconditioners are determined. The notations $\sigma(.)$ and $||.||$ denote the set of all eigenvalues of a matrix and the Euclidean norm of a vector, respectively. We use 
$\Re(\lambda)$ and $\Im(\lambda)$ to denote the real and imaginary parts of a complex eigenvalue $\lambda$.
The preconditioners $\mathcal{Q}_{3}$ and $\mathcal{Q}_{4}$ with positive (3,3)-block are denoted by $\Qa$ and $\Qb$.

\begin{Theorem}\label{Th1}
Suppose that $A\in \mathbb{R}^{n \times n}$ is symmetric positive definite and $B\in \mathbb{R}^{m \times n}$ and $C \in \mathbb{R}^{l \times m}$ are matrices with full row rank. Then the preconditioner $\mathcal{Q}_{1}$ for $\mathcal{A}$ satisfies  
$$\sigma(\mathcal{A}\mathcal{Q}_{1}^{-1}) \in \Big\{1,\frac{1}{2}(1\pm \sqrt{3}i)\Big\}.$$
\end{Theorem}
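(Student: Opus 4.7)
The plan is to study the generalized eigenvalue problem $\mathcal{A}v = \lambda\,\mathcal{Q}_1 v$ with $v=(x,y,z)^T$. Writing this out block by block gives the three relations
\begin{equation*}
Ax + B^T y = \lambda(Ax + B^T y), \qquad Bx + C^T z = -\lambda S y, \qquad Cy = \lambda X z.
\end{equation*}
The first line immediately splits the analysis: either $\lambda=1$, which accounts for one of the three asserted eigenvalues, or $Ax + B^T y = 0$, and in the latter case I can solve $x = -A^{-1}B^T y$.

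Assuming $\lambda\neq 1$, I would substitute $x=-A^{-1}B^T y$ into the second block equation. The $BA^{-1}B^T$ term collapses to $S$ by definition, so the second equation reduces to $(\lambda-1)Sy + C^T z = 0$. Since $S$ is SPD ($A$ is SPD and $B$ has full row rank), I can solve $y = -(\lambda-1)^{-1} S^{-1} C^T z$ and insert this into the third relation $Cy=\lambda X z$. The combination $CS^{-1}C^T$ is precisely $X$, so I obtain $-(\lambda-1)^{-1} X z = \lambda X z$, i.e.\ $\bigl(\lambda + (\lambda-1)^{-1}\bigr) X z = 0$.

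Since $C$ has full row rank, $X$ is SPD, hence $Xz=0$ forces $z=0$, which in turn (via the reduced second equation and injectivity of $S$, $C^T$) would force $y=0$ and $x=0$, a non-eigenvector. So I may assume $Xz\neq 0$, which yields the scalar equation $\lambda(\lambda-1) = -1$, i.e.\ the quadratic $\lambda^{2}-\lambda+1=0$, whose roots are exactly $\tfrac{1}{2}(1\pm\sqrt{3}\,i)$.

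I do not expect a serious obstacle here; the computation is a direct elimination. The only care needed is in the degenerate cases: I must check that $y=0$ or $z=0$ lead to the trivial vector (using the full row rank of $B$ and $C$ together with positive definiteness of $A$, $S$, and $X$) so that the only genuine eigenvalues produced in the case $\lambda\neq 1$ are the two complex roots of $\lambda^{2}-\lambda+1$.
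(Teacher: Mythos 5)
Your proof is correct and uses essentially the same block-elimination argument as the paper; the only stylistic difference is that you work with the generalized eigenproblem $\mathcal{A}v=\lambda\,\mathcal{Q}_1 v$ directly rather than first computing $\mathcal{Q}_1^{-1}$ and the product $\mathcal{A}\mathcal{Q}_1^{-1}$ as the paper does. Both routes cascade the elimination through the three block rows, reduce to a scalar condition on the last block, and arrive at the same quadratic $\lambda^2-\lambda+1=0$.
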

\begin{proof}
First we must compute $\mathcal{Q}_{1}^{-1}$. An easy calculations yields that 
\begin{equation}\label{Eq9}
\mathcal{Q}_{1}^{-1}=\begin{pmatrix}A^{-1}&A^{-1}B^{T}S^{-1}&0\\0&-S^{-1}&0\\0&0&X^{-1}\end{pmatrix},
\end{equation}
It follows directly from \eqref{Eq9} that 
\begin{equation}\label{Eq10}
\mathcal{A}\mathcal{Q}_{1}^{-1}=\begin{pmatrix}I&0&0\\BA^{-1}&I&C^{T}X^{-1}\\0&-CS^{-1}&0\end{pmatrix}.
\end{equation}
We now determine the eigenvalues of the preconditioned matrix $\mathcal{A}\mathcal{Q}_{1}^{-1}$ by using the Laplace expansion. Therefore, the characteristic polynomial of $\mathcal{A}\mathcal{Q}_{1}^{-1}$ is given by
\begin{equation}\label{Eq11}
q(\lambda)=\text{det}(\lambda I- \mathcal{A}\mathcal{Q}_{1}^{-1})=(\lambda-1)^{n}
\begin{vmatrix}
(\lambda-1)I & -C^{T}X^{-1} \\ 
CS^{-1}& \lambda I
\end{vmatrix}.
\end{equation}
Clearly, $\lambda =1$ is an eigenvalue of $\mathcal{A}\mathcal{Q}_{1}^{-1}$ with algebraic multiplicity at least $n$.
To determine the rest of eigenvalues, we seek $\lambda \neq 1,\: x_{2}$ and $x_{3}$ satisfying 
\begin{align}
(\lambda-1)x_{2}-C^{T}X^{-1}x_{3}&=0, \label{Eq12-0}\\
CS^{-1}x_{2}+\lambda x_{3}&=0. \label{Eq12-1}
\end{align}
Computing $x_{2}=\frac{1}{\lambda-1}CS^{-1}x_{3}$ from equation \eqref{Eq12-0} and
substituting the value $x_{2}$ into equation \eqref{Eq12-1}, we get
\begin{equation}\label{Eq13}
\Big(\lambda^{2}I-\lambda I+I\Big)x_{3}=0
\end{equation}
Note that the vector  $x_{3}$ must be nonzero, otherwise if $x_{3} = 0$, then $x_{2}=0$, and we saw that $x_{1}=0$ if $\lambda \neq 1$. Without loss of generality, we can assume 
that $x_{3}^{*}x_{3}=1$. Multiplying equation \eqref{Eq13} on the left by $x_{3}^{*}$, we obtain 
\begin{equation}\label{Eq14}
\lambda^{2}-\lambda+1=0.
\end{equation}
The roots of \eqref{Eq14} are equal to
$\lambda=\frac{1}{2}(1\pm \sqrt{3}i)$,
which completes the proof.
\end{proof}

\begin{Remark}
From the foregoing theorem it is evident that the preconditioned matrix $\mathcal{A}\mathcal{Q}_{1}^{-1}$ has eigenvalues clustered around three values 1, $\frac{1}{2}(1+ \sqrt{3}i)$, and $\frac{1}{2}(1- \sqrt{3}i)$, therefore one can expect rapid convergence for the preconditioned GMRES method.
\end{Remark}

\begin{Remark}
It is easy to verify that the preconditioned matrix $\mathcal{T}=\mathcal{A}\mathcal{Q}_{1}^{-1}$ satisfies the following polynomial
\begin{equation*}\label{EqR1}
(\mathcal{T}-\mathcal{I})(\mathcal{T}^{2}-\mathcal{T}+\mathcal{I})=0.
\end{equation*}
Since the above relation can be factorized into distinct linear factors (over $\mathbb{R}$), we conclude that $\mathcal{T}$ is diagonalizable and has at most three distinct eigenvalues $1, \frac{1}{2}(1\pm \sqrt{3}i)$.
\end{Remark}

\begin{Theorem}\label{Th2}
Suppose that $A \in \mathbb{R}^{n \times n}$ is symmetric positive definite and $B \in \mathbb{R}^{m \times n}$ and $C \in \mathbb{R}^{l \times m}$ are matrices with full row rank. Then the preconditioner $\mathcal{Q}_{2}$ for $\mathcal{A}$ satisfies  
$$\sigma(\mathcal{A}\mathcal{Q}_{2}^{-1}) \in \Big\{\pm 1,\pm i\Big\}.$$
\end{Theorem}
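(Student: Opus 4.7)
The plan is to follow the blueprint of Theorem~\ref{Th1}. First I would invert $\mathcal{Q}_{2}$ by exploiting its block upper triangular structure: the diagonal blocks invert to $A^{-1}$, $S^{-1}$, $-X^{-1}$, and the super--diagonal entries are obtained by back--substitution, yielding blocks of the form $-A^{-1}B^{T}S^{-1}$, $S^{-1}C^{T}X^{-1}$, and $-A^{-1}B^{T}S^{-1}C^{T}X^{-1}$.

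Next I would form $\mathcal{A}\mathcal{Q}_{2}^{-1}$. The cancellation pattern $A\cdot(-A^{-1}B^{T}S^{-1})+B^{T}\cdot S^{-1}=0$ (and its analogue in position $(1,3)$) should zero out the first block row apart from the leading $I_{n}$, while the two identities $BA^{-1}B^{T}=S$ and $CS^{-1}C^{T}=X$ should collapse the $(2,2)$ block to $-I_{m}$ and the $(3,3)$ block to $I_{l}$. I expect the resulting matrix to have the form
\[
\mathcal{A}\mathcal{Q}_{2}^{-1}=\begin{pmatrix}I&0&0\\ BA^{-1}&-I&-2C^{T}X^{-1}\\ 0&CS^{-1}&I\end{pmatrix},
\]
where the $-2C^{T}X^{-1}$ entry in position $(2,3)$ arises as the sum of two equal contributions $-C^{T}X^{-1}$ coming from $B\cdot(-A^{-1}B^{T}S^{-1}C^{T}X^{-1})$ and from $C^{T}\cdot(-X^{-1})$.

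From this explicit form, a Laplace expansion along the first block column extracts the factor $(\lambda-1)^{n}$ and reduces the problem to the characteristic determinant of
\[
\begin{pmatrix}(\lambda+1)I&2C^{T}X^{-1}\\ -CS^{-1}&(\lambda-1)I\end{pmatrix}.
\]
For $\lambda\neq -1$ I would take the Schur complement with respect to the $(\lambda+1)I$ block, using once more $CS^{-1}C^{T}X^{-1}=I$, so that the factor becomes $(\lambda+1)^{m-l}(\lambda^{2}+1)^{l}$. Equivalently, in the style of Theorem~\ref{Th1}, I would solve for $x_{2}$ from $(\lambda+1)x_{2}+2C^{T}X^{-1}x_{3}=0$, substitute into $-CS^{-1}x_{2}+(\lambda-1)x_{3}=0$, and arrive at $(\lambda^{2}+1)x_{3}=0$, which forces $\lambda=\pm i$.

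The main delicate point will be the degenerate case $\lambda=-1$, for which the Schur step above is invalid and the elimination of $x_{2}$ breaks down. I would address it by plugging $\lambda=-1$ directly into the two--block eigensystem and checking that $-1$ is an admissible value (occurring with multiplicity $m-l\geq 0$ thanks to the full row rank of $C$). Combining the root $\lambda=1$ produced by the $(\lambda-1)^{n}$ factor, the roots $\lambda=\pm i$ produced by $\lambda^{2}+1$, and the possibly occurring $\lambda=-1$, I conclude $\sigma(\mathcal{A}\mathcal{Q}_{2}^{-1})\subseteq\{\pm 1,\pm i\}$, as required.
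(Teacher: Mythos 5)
Your proof is correct and arrives at the same reduced $2\times 2$ block eigensystem as the paper, but the elimination order is genuinely different and pays off. The paper solves the second equation for $x_3=\tfrac{1}{\lambda-1}CS^{-1}x_2$, substitutes into the first, and is left with $\lambda^2-1+2\,x_2^*C^TX^{-1}CS^{-1}x_2=0$; it then needs the auxiliary observation that $C^TX^{-1}CS^{-1}$ is an (oblique) projector onto $\mathcal{R}(C^TX^{-1})$ and that $x_2$ lies in that range, so the quadratic form equals $1$. You instead solve the first equation for $x_2=-\tfrac{2}{\lambda+1}C^TX^{-1}x_3$ and substitute into the second, at which point the combination $CS^{-1}C^TX^{-1}=XX^{-1}=I$ collapses directly to $(\lambda^2+1)x_3=0$; no projector identity or normalization of the eigenvector is needed. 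The Schur-complement computation that you add, giving $\det=(\lambda-1)^n(\lambda+1)^{m-l}(\lambda^2+1)^l$, is also a useful refinement: it yields the exact algebraic multiplicities (in particular that $-1$ occurs with multiplicity $m-l\ge 0$, consistent with full row rank of $C$), which is stronger than the set inclusion the paper states, and it makes the $\lambda=-1$ discussion essentially automatic since two polynomials agreeing off $\lambda=-1$ agree everywhere. The only small caveat is that you should state explicitly (as the paper does via Remark~\ref{Re3}) that $-1$ appears as an eigenvalue only when $m>l$, i.e.\ when $CS^{-1}$ has a nontrivial kernel; but this already follows from your multiplicity count.
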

\begin{proof}
Straightforward calculations reveal that
\begin{equation}\label{Eq18}
\mathcal{Q}_{2}^{-1}=\begin{pmatrix}A^{-1}&-A^{-1}B^{T}S^{-1}&-A^{-1}B^{T}S^{-1}C^{T}X^{-1}\\0&S^{-1}&S^{-1}C^{T}X^{-1}\\0&0&-X^{-1}\end{pmatrix}.
\end{equation}
It follows from \eqref{Eq18} that 
\begin{equation}\label{Eq19}
\mathcal{A}\mathcal{Q}_{2}^{-1}=\begin{pmatrix}I&0&0\\BA^{-1}&-I&-2C^{T}X^{-1}\\0&CS^{-1}&I\end{pmatrix}.
\end{equation}
To proceed we use the Laplace expansion to determine the eigenvalues of $\mathcal{A}\mathcal{Q}_{2}^{-1}$. Therefore, the characteristic polynomial of $\mathcal{A}\mathcal{Q}_{2}^{-1}$ is given by
\begin{equation}\label{Eq20}
\widehat{q}(\lambda)=\text{det}(\lambda I- \mathcal{A}\mathcal{Q}_{2}^{-1})=(\lambda-1)^{n}
\begin{vmatrix}
(\lambda+1)I & 2C^{T}X^{-1} \\ 
-CS^{-1}& (\lambda-1) I
\end{vmatrix}.
\end{equation}
It is clear that $\lambda=1$ is an eigenvalue of  $\mathcal{A}\mathcal{Q}_{2}^{-1}$ with algebraic multiplicity at least $n$.
To find the remaining eigenvalues, we seek $\lambda \neq 1,\: x_{2}$ and $x_{3}$ satisfying
\begin{align}
(\lambda+1)x_{2}+2C^{T}X^{-1}x_{3}&=0, \label{Eq21-0}\\
-CS^{-1}x_{2}+(\lambda-1) x_{3}&=0. \label{Eq21-1}
\end{align}
Notice that $x_{2} \neq 0$, otherwise if $x_{2} = 0$, then $x_{3}=0$ from equation \eqref{Eq21-1}, and we saw that $x_{1}=0$ if $\lambda \neq 1$.
From equation \eqref{Eq21-1}, we derive $x_{3}=\frac{1}{\lambda-1}CS^{-1}x_{2}$. If $x_{2}\in \text{ker}(CS^{-1})$, then $\lambda=-1$ is an eigenvalue of $\mathcal{A}\mathcal{Q}_{2}^{-1}$.  Thus, it  can be deduced that $(0;x_{2};0)$ is an eigenvector associated with $\lambda = -1$, where $x_{2}$ is an arbitrary vector. In the sequel we assume that $\lambda \neq -1$.
Substituting the value $x_{3}$ into equation \eqref{Eq21-0}, we get
\begin{equation}\label{Eq22}
\Big((\lambda^{2}-1)I+2C^{T}X^{-1}CS^{-1}\Big)x_{2}=0.
\end{equation}
We normalize $x_{2}$ such that $x_{2}^{*}x_{2}=1$, and multiply equation \eqref{Eq22} by $x_{2}$ on the left 
to obtain 
\begin{equation}\label{Eq23}
\lambda^{2}-1+2x_{2}^{*}C^{T}X^{-1}CS^{-1}x_{2}=0.
\end{equation}
From the above relation, the eigenvalue $\lambda$ can be expressed 
\begin{equation}\label{Eq24}
\lambda=\pm \sqrt{1-2x_{2}^{*}C^{T}X^{-1}CS^{-1}x_{2}}.
\end{equation}
It is easy to verify that $C^{T}X^{-1}CS^{-1}$ is a projector onto $\mathcal{R}(C^{T}X^{-1})=\mathcal{R}(C^{T})$, where $\mathcal{R}$ denotes the range of a matrix. We rewrite the relation \eqref{Eq21-0} as
\begin{equation}\label{Eq16}
x_{2}=\Big(\frac{-2}{\lambda+1}\Big)C^{T}X^{-1}x_{3},
\end{equation} 
and hence we observe $x_{2} \in \mathcal{R}(C^{T}X^{-1})$. Consequently, we have 
\begin{equation}\label{Eq17}
x_{2}^{*}C^{T}X^{-1}CS^{-1}x_{2}=1. 
\end{equation} 
It follows that $\mathcal{A}\mathcal{Q}_{2}^{-1}$ has eigenvalues $\lambda =\pm i$, and we have proved the theorem.
\end{proof}

\begin{Remark}\label{Re3}
From equations \eqref{Eq21-0}, \eqref{Eq21-1} and  the proof of Theorem \ref{Th2}, it can be seen that $\lambda=-1$ may or may not be an eigenvalue of $\mathcal{A}\mathcal{Q}_{2}^{-1}$. We observed that $\lambda=-1$ is an eigenvalue if $x_2 \in \text{ker}(CS^{-1})$. Conversely, suppose that $\lambda=-1$ is an eigenvalue of $\mathcal{A}\mathcal{Q}_{2}^{-1}$. From Eqs. \eqref{Eq21-0} and \eqref{Eq21-1}, we have $x_{3}=0$ and then $CS^{-1}x_{2}=0$ which means that  $x_2 \in \text{ker}(CS^{-1})$.
This condition is necessary and sufficient for $\lambda=-1$ to be an eigenvalue of $\mathcal{A}\mathcal{Q}_{2}^{-1}$ with associated eigenvector of the form $(0;x_{2};0).$
\end{Remark}

\begin{Remark}\label{Re4}
It is easy to check that the preconditioned matrix $\mathcal{F}=\mathcal{A}\mathcal{Q}_{2}^{-1}$
satisfies
\begin{equation}\label{EqR2}
(\mathcal{F}-\mathcal{I})(\mathcal{F}+\mathcal{I})(\mathcal{F}^{2}+\mathcal{I})=0.
\end{equation}
From the relation \eqref{EqR2}, we can conclude that $\mathcal{F}$ is diagonalizable and has at most four distinct eigenvalues $\pm1, \pm i$.
\end{Remark}

\begin{Remark}
According to the following partitioning  
\begin{equation*}
\left(
\begin{array}{cc|c}
*&*&0\\ *&*&*\\ \hline
0&0&*
\end{array}
\right),
\end{equation*}
it is clear that the preconditioners $\mathcal{P}_{1}$, $\mathcal{P}_{2}$, $\mathcal{Q}_{3}$ and 
$\Qa$ have the same structure and all are in the class of block triangular preconditioners. The eigenvalues of the preconditioned matrices $\mathcal{A}\mathcal{Q}_{3}^{-1}$ and $\mathcal{A}\Qainv  $ are also provided here for completeness.
\end{Remark}

\begin{Theorem}\label{Th3}
Suppose that $A \in \mathbb{R}^{n \times n}$ is symmetric positive definite and $B \in \mathbb{R}^{m \times n}$ and $C \in \mathbb{R}^{l \times m}$ have full row rank. Then the eigenvalues of the preconditioned matrices  $\mathcal{A}\mathcal{Q}_{3}^{-1}$ and $\mathcal{A}\mathcal{Q}_{4}^{-1}$ are 1 and -1.
\end{Theorem}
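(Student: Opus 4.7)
The plan is to handle $\mathcal{Q}_3$ and $\mathcal{Q}_4$ separately, exploiting in both cases the structural identities $BA^{-1}B^T = S$ and $CS^{-1}C^T = X$. Since the $+$-sign variants $\Qa$ and $\Qb$ are singled out for the analysis of later sections, I take the minus sign in the $(3,3)$ block throughout.

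For $\mathcal{Q}_3$, the matrix is block upper triangular with invertible diagonal blocks $A$, $-S$, $-X$, so $\mathcal{Q}_3^{-1}$ is obtained by straightforward block back-substitution and remains block upper triangular. I would then multiply out $\mathcal{A}\mathcal{Q}_3^{-1}$ entry by entry and expect massive cancellation above the diagonal: in the $(1,2)$ and $(1,3)$ positions the contributions from the $A$ block cancel those from the $B^T$ block, while the identity $BA^{-1}B^T = S$ cancels the $(2,3)$ entry and $CS^{-1}C^T = X$ both cancels the off-diagonal coupling in the third row and produces $-I$ on the $(3,3)$ diagonal. The result should be block lower triangular with diagonal blocks $I$, $I$, $-I$, from which the eigenvalues $1$ (with multiplicity $n+m$) and $-1$ (with multiplicity $l$) can be read directly.

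For $\mathcal{Q}_4$, the $(3,2)$ block $C$ breaks the triangular structure, so direct inversion is less convenient. I would instead work with the generalized eigenproblem $\mathcal{A}u = \lambda\mathcal{Q}_4 u$ written out for $u = (x;y;z)$. The first block row immediately yields $(\lambda-1)(Ax+B^T y) = 0$, which splits the analysis. If $\lambda = 1$, the remaining two equations reduce to $C^T z = 0$, which (since $C$ has full row rank) forces $z = 0$ with $(x,y)$ otherwise free, providing eigenvalue $1$ with multiplicity $n+m$. If $\lambda \neq 1$, then $Ax+B^T y = 0$ permits eliminating $x$, the second equation solves $y$ in terms of $z$ via $Sy = -(\lambda-1)^{-1} C^T z$, and substituting into the third equation together with $CS^{-1}C^T = X$ collapses it to $(1+\lambda)X z = 0$; invertibility of $X$ forces $\lambda = -1$, with $z$ ranging over $\mathbb{R}^l$ and contributing multiplicity $l$.

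The main obstacle will be the bookkeeping in the $\mathcal{Q}_4$ case: one must verify that the $n+m$ eigenvectors from the $\lambda=1$ branch and the $l$ eigenvectors parametrised by $z$ in the $\lambda=-1$ branch are genuinely independent, so their total dimension $n+m+l$ exhausts the preconditioned matrix and no spurious eigenvalue has been lost. The $\mathcal{Q}_3$ argument, by contrast, is essentially mechanical once the cancellation identities are in view.
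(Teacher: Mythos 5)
Your argument for $\mathcal{Q}_3$ coincides with the paper's: both compute $\mathcal{Q}_3^{-1}$ by block back-substitution and multiply out, observing that $BA^{-1}B^T = S$ and $CS^{-1}C^T = X$ zero out the superdiagonal blocks and leave $\mathcal{A}\mathcal{Q}_3^{-1}$ block lower triangular with diagonal $I,\,I,\,-I$.

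For $\mathcal{Q}_4$ you depart from the paper. The paper does the analogous thing: it assembles $\mathcal{Q}_4^{-1}$ explicitly (using the standard $2\times 2$ block inverse of $\begin{pmatrix}A & B^T\\ B & 0\end{pmatrix}$ for the leading block, then bordered by the last row and column) and verifies directly that $\mathcal{Q}_4^{-1}\mathcal{A}$ is block upper triangular with diagonal $I,\,I,\,-I$. You instead solve the generalized eigenproblem $\mathcal{A}u = \lambda\mathcal{Q}_4 u$, splitting on whether $\lambda = 1$. Both are correct; your route exposes the eigenvector structure and algebraic multiplicities ($n+m$ for $\lambda=1$, $l$ for $\lambda=-1$) as a by-product, while the paper's computation is more uniform with the $\mathcal{Q}_3$ case and hands you the explicit triangular form that Remark~5 relies on to read off the degree of the minimal polynomial. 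Your concern about possible spurious or lost eigenvalues at the end is unfounded: eigenvectors belonging to the two distinct eigenvalues $1$ and $-1$ are automatically linearly independent, and the $\lambda=1$ eigenspace $\{(x;y;0)\}$ has dimension $n+m$ while the $\lambda=-1$ branch is parametrised injectively by $z\in\mathbb{R}^l$, so the total count is $n+m+l$ and the spectrum is exhausted. One small clean-up: the exclusion of $z=0$ in the $\lambda\neq 1$ branch should be stated explicitly — $z=0$ forces $y=0$ (since $S$ is invertible) and then $x=0$ (since $A$ is invertible), contradicting that $u$ is an eigenvector — so $(1+\lambda)Xz=0$ with $z\neq 0$ indeed forces $\lambda=-1$.
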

\begin{proof}
After straightforward computations, we can obtain
\begin{equation*}\label{Eq25}
\begin{aligned}
\mathcal{A}\mathcal{Q}_{3}^{-1}&=
\begin{pmatrix}A&B^{T}&0\\B&0&C^{T}\\0&C&0\end{pmatrix}
\begin{pmatrix}A^{-1}&A^{-1}B^{T}S^{-1}&A^{-1}B^{T}S^{-1}C^{T}X^{-1}\\0&-S^{-1}&-S^{-1}C^{T}X^{-1}\\0&0&-X^{-1}\end{pmatrix}\\
&=\begin{pmatrix}I&0&0\\BA^{-1}&I&0\\0&-CS^{-1}&-I\end{pmatrix},
\end{aligned}
\end{equation*}
and
\begin{equation*}\label{Eq26}
\begin{aligned}
\mathcal{Q}_{4}^{-1}\mathcal{A}&=
\begin{pmatrix}A^{-1}-A^{-1}B^{T}S^{-1}BA^{-1}&A^{-1}B^{T}S^{-1}&0\\S^{-1}BA^{-1}&-S^{-1}&0
\\X^{-1}CS^{-1}BA^{-1}&-X^{-1}CS^{-1}&-X^{-1}\end{pmatrix}
\begin{pmatrix}A&B^{T}&0\\B&0&C^{T}\\0&C&0\end{pmatrix}\\
&=\begin{pmatrix}I&0&A^{-1}B^{T}S^{-1}C^{T}\\0&I&-S^{-1}C^{T}\\0&0&-I\end{pmatrix},
\end{aligned}
\end{equation*}
which implies that preconditioned matrices  $\mathcal{A}\mathcal{Q}_{3}^{-1}$ and $\mathcal{Q}_{4}^{-1}\mathcal{A}$ (or $\mathcal{A}\mathcal{Q}_{4}^{-1}$) have eigenvalues 1 and -1.
\end{proof}

\begin{Remark}\label{Re5}
It is easy to check that the minimum polynomial of the preconditioned matrices $\mathcal{A}\mathcal{Q}_{3}^{-1}$ and $\mathcal{Q}_{4}^{-1}\mathcal{A}$ have order 4.  
One may imply favorable convergence rate for the Krylov subspace methods.
\end{Remark}

\begin{Remark} It is obvious that the preconditioned matrices $\mathcal{A}\Qainv$ and $\Qbinv\mathcal{A}$ satisfy
\begin{equation*}\label{Eq26}
\mathcal{A}\Qainv=\begin{pmatrix}I&0&0\\BA^{-1}&I&0\\0&-CS^{-1}&I\end{pmatrix}, \quad  \text{and} \quad
\Qbinv\mathcal{A}=\begin{pmatrix}I&0&A^{-1}B^{T}S^{-1}C^{T}\\0&I&-S^{-1}C^{T}\\0&0&I\end{pmatrix}.
\end{equation*}
Hence, we conclude that the eigenvalues of $\mathcal{A}\Qainv$ and 
$\Qbinv\mathcal{A}$ are all one. Moreover, the minimum polynomial of the preconditioned matrix $\mathcal{A}\Qainv$ has order 3, while 
$\Qbinv\mathcal{A}$ has minimum polynomial of order  2. Therefore, the GMRES method will reach the exact solution in at most three and two steps, respectively.
\end{Remark}

\begin{Theorem}\label{Th5}
Suppose that $A\in \mathbb{R}^{n \times n}$ is symmetric positive definite and $B\in \mathbb{R}^{m \times n}$ and $C \in \mathbb{R}^{l \times m}$ are matrices with full row rank. Then the preconditioner $\mathcal{Q}_{5}$ for $\mathcal{A}$ satisfies  
$$\sigma(\mathcal{A}\mathcal{Q}_{5}^{-1}) \in \Big\{1,\frac{1}{2}(1\pm \sqrt{3}i)\Big\}.$$
\end{Theorem}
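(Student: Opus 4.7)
The plan is to follow the same strategy as in the proof of Theorem \ref{Th1}. Observe that $\mathcal{Q}_{5}$ is block-diagonal with respect to the splitting that separates the third block from the first two: the top-left $2\times 2$ block is the classical Stokes-type saddle-point matrix $\begin{pmatrix} A & B^{T} \\ B & 0 \end{pmatrix}$ and the bottom-right block is $X$. Hence $\mathcal{Q}_{5}^{-1}$ can be assembled from $A^{-1}$, $S^{-1}$, and $X^{-1}$ via the well-known inverse of a Stokes-type block. Forming $\mathcal{A}\mathcal{Q}_{5}^{-1}$ and simplifying using the identity $BA^{-1}B^{T}=S$, I expect to obtain
\begin{equation*}
\mathcal{A}\mathcal{Q}_{5}^{-1}=\begin{pmatrix} I & 0 & 0 \\ 0 & I & C^{T}X^{-1} \\ CS^{-1}BA^{-1} & -CS^{-1} & 0 \end{pmatrix}.
\end{equation*}

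With this form in hand, I would set up the eigenvalue equation $\mathcal{A}\mathcal{Q}_{5}^{-1} v = \lambda v$ for $v=(x_{1};x_{2};x_{3})$. The first block-row reads $(\lambda-1)x_{1}=0$, which accounts for the eigenvalue $\lambda=1$ with algebraic multiplicity at least $n$, with associated eigenvectors supported on the first block. In the complementary case $x_{1}=0$, the contribution of the (3,1) block drops out and the remaining two block-rows reduce to exactly the $2\times 2$ reduced system already encountered in Theorem \ref{Th1}:
\begin{align*}
(\lambda-1)x_{2} - C^{T}X^{-1}x_{3} &= 0, \\
CS^{-1}x_{2} + \lambda x_{3} &= 0.
\end{align*}
Eliminating $x_{2}$ from the first equation, substituting into the second, and invoking the identity $CS^{-1}C^{T}=X$ yields (after observing that $x_{3}$ must be nonzero) the scalar equation $\lambda^{2}-\lambda+1=0$, whose roots are $\frac{1}{2}(1\pm\sqrt{3}i)$.

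The only delicate step is bookkeeping: verifying that the (2,1) and (2,2) blocks of $\mathcal{A}\mathcal{Q}_{5}^{-1}$ collapse to $0$ and $I$ respectively, which requires the cancellations $BA^{-1}-BA^{-1}B^{T}S^{-1}BA^{-1}=BA^{-1}-SS^{-1}BA^{-1}=0$ and $BA^{-1}B^{T}S^{-1}=I$. Once this reduction is carried out, the argument is a direct reuse of the analysis in Theorem \ref{Th1}; the extra nonzero (3,1) block $CS^{-1}BA^{-1}$ is harmless because it is annihilated precisely on the subspace $\{x_{1}=0\}$ on which the non-unit eigenvalues live. This produces the claimed spectrum $\{1,\tfrac{1}{2}(1\pm\sqrt{3}i)\}$.
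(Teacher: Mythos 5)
Your proposal is correct and essentially replicates the paper's own argument: the paper likewise computes $\mathcal{Q}_{5}^{-1}$ via the Stokes-type block inverse, arrives at exactly the same form of $\mathcal{A}\mathcal{Q}_{5}^{-1}$, and then simply notes that ``the rest of the proof is similar to that of Theorem~\ref{Th1}'' --- precisely the reduction to the $2\times 2$ system and the quadratic $\lambda^{2}-\lambda+1=0$ that you carry out explicitly. The only item you do not reproduce (and which the paper adds as a supplementary observation rather than as part of the eigenvalue derivation) is the annihilating polynomial $(\mathcal{J}-\mathcal{I})(\mathcal{J}^{2}-\mathcal{J}+\mathcal{I})=0$ used to conclude diagonalizability, but this is not required to establish the stated spectrum.
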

\begin{proof}
By simple calculations, we can obtain 
\begin{equation*}\label{Eq27}
\mathcal{Q}_{5}^{-1}=\begin{pmatrix}A^{-1}-A^{-1}B^{T}S^{-1}BA^{-1}&A^{-1}B^{T}S^{-1}&0\\S^{-1}BA^{-1}&-S^{-1}&0\\0&0&X^{-1}\end{pmatrix}.
\end{equation*}
We readily verify that  
\begin{equation*}\label{Eq28}
\mathcal{A}\mathcal{Q}_{5}^{-1}=\begin{pmatrix}I&0&0\\0&I&C^{T}X^{-1}\\CS^{-1}BA^{-1}&-CS^{-1}&0\end{pmatrix}.
\end{equation*}
The rest of the proof is similar to that of Theorem \ref{Th1}; we omit the details here. 
It can also be shown that the preconditioned matrix $\mathcal{J}=\mathcal{A}\mathcal{Q}_{5}^{-1}$ satisfies
\begin{equation*}\label{EqR3}
(\mathcal{J}-\mathcal{I})(\mathcal{J}^{2}-\mathcal{J}+\mathcal{I})=0,
\end{equation*}
and it follows that  $\mathcal{J}$ is diagonalizable and has at most three distinct eigenvalues $1, \frac{1}{2}(1\pm \sqrt{3}i)$.
\end{proof}

Finally, we mention that although the preconditioners type $\mathcal{P}$ and $\mathcal{Q}$ are theoretically powerful, they are not practical in real-world problems. Solving linear systems involving $S$ and $X$  can be prohibitive.
However, we may be able to devise effective inexact version of these preconditioners for $\mathcal{A}$ by approximating
the Schur complement matrices.
To apply the proposed type $\mathcal{Q}$ preconditioners within a Krylov subspace method,
we require to solve the linear system of equations with the coefficient matrices $A$, 
$S=BA^{-1}B^{T}$ and $X=CS^{-1}C^{T}$. 
Instead of exactly performing solves with $S$ and $X$, we approximate $S$ and $X$ by matrices 
$\widehat S$ and $\widehat X$, respectively, and then the linear systems involving these matrices can be iteratively solved.

\section{Experimental comparisons of the preconditioners}\label{Dsec}
In this section we present a numerical test, which will be solved by 
Flexible GMRES (FGMRES) with no restart, using the previously described preconditioners. The comparisons will be carried on in terms of 
number of FGMRES iterations (represented by ITS) and elapsed CPU time in seconds (represented by CPU).
We also provide the norm of error vectors denoted as $\text{ERR}=\|w^{(k)}-w^{*}\|_{2}/\|w^{*}\|_{2}$. 
The initial guess is set to be the zero vector and the iterations will be stopped whenever 
$$\|b-\mathcal{A}w^{(k)}\|_{2}/\|b\|_{2} < \texttt{tol},$$
with the tolerance $\texttt{tol}$ selected as will be explained below.

In all cases, the right-hand side vector $b$ is computed after selecting the exact solution of \eqref{Eq1} as
\begin{enumerate}
\item 		$w= e \equiv (1,1,\ldots,1)^{T} \in \mathbb{R}^{n+m+l}$.
\item 		$w$ a random vector of the appropriate dimension.
\end{enumerate}

The numerical experiments presented in this work have been carried out on a computer with an 
Intel Core i7-1185G7 CPU @ 3.00GHz processor and 16 GB RAM using $\textsc{Matlab}$ 2022a.

\begin{Example}(\cite{Huang1,Xie})\label{Ex2}
We consider the linear system of equations \eqref{Eq1} for which
\begin{equation*}
A=\text{diag}\begin{pmatrix}2W^{T}W+D_{1},D_{2},D_{3}\end{pmatrix}\in \mathbb{R}^{n \times n},
\end{equation*}
is a block diagonal matrix;
\begin{equation*}
B=[E,-I_{2p_{1}},I_{2p_{2}}]\in \mathbb{R}^{m \times n},\quad \text{and}\quad C = E^{T}\in \mathbb{R}^{l \times m},
\end{equation*}
are both full row rank matrices where $p_{1} = p^{2},\: p_{2} = p(p + 1)$; $W = (w_{ij}) \in \mathbb{R}^{p_{2} \times p_{2}}$ with 
$w_{ij} = e^{-2((i/3)^{2}+(j/3)^{2})}$; $D_{1}= I_{p_{2}}$ is the identity matrix;
$D_{i}= \text{diag}(d^{(i)}_{j}) \in \mathbb{R}^{2p_{1} \times 2p_{1}},\:( i=2,3)$
are diagonal matrices with
\begin{equation*}
\begin{aligned}
&d^{(2)}_{j}=\begin{cases}
1, \qquad &\text{for} \quad 1\leq j \leq p_{1}, \\
10^{-5}(j-p_{1}^{2}), \qquad &\text{for} \quad p_{1}+1\leq j \leq 2p_{1},
\end{cases}\\
&d^{(3)}_{j}=10^{-5}(j+p_{1}^{2}), \qquad \text{for}\quad  1\leq j \leq 2p_{1};
\end{aligned}
\end{equation*}
and 
\begin{equation*}
E=\begin{pmatrix} E_{1}\otimes I_{p}\\  I_{p}\otimes E_{1} \end{pmatrix}, \qquad \text{with} \qquad
E_{1}=\begin{pmatrix} 2&-1&\\ &2&-1\\& & \ddots & \ddots\\ & & & 2&-1 \end{pmatrix}\in \mathbb{R}^{p \times (p+1)}.
\end{equation*}
To summarize we have $n = 5p^2 + p$, $m = 2p^2$ and $l = p^2 + p$ and the size of the double saddle point matrix is $n+m+l = 8p^2 + 2p$.
\end{Example}

As the block approximations, we take $\widehat S=\text{tridiag}(B\widehat A^{-1}B^{T})$, the tridiagonal part of $S$,
where $\widehat A=\text{diag}(A)$ and
$\widehat X=C L_S^{-T} L_S^{-1} C^{T}$ with $L_S$ the exact bidiagonal factor of $\widehat S$.

In addition to the previously analyzed preconditioners we also present the results of an inexact variant of 
the block preconditioner considered in \cite{AslSalBei} (here denoted as $\mathcal{P}_{ASB}$), 
where the Schur complement  matrix is simply approximate with the identity matrix ($\widehat S \equiv I$),
showing outstanding results in the solution of this Example.  
However, these results (convergence in 2 iterations) can be obtained only by the choice of the right-hand-side ($b = A e$) 
and of the initial (zero) vector. In this case the initial residual $r_0 \equiv b$ satisfies
$\mathcal A r_0 = \mathcal {P}_{ASB} r_0$ which makes the Krylov subspace of degree 1 invariant for this particular 
right-hand-side  and therefore ensures convergence in at most two iterations. Clearly, changing the right-hand-side, 
this property does no longer hold.
\begin{figure}[H]\centering
\begin{minipage}{6.4cm}
\includegraphics[width=6.4cm]{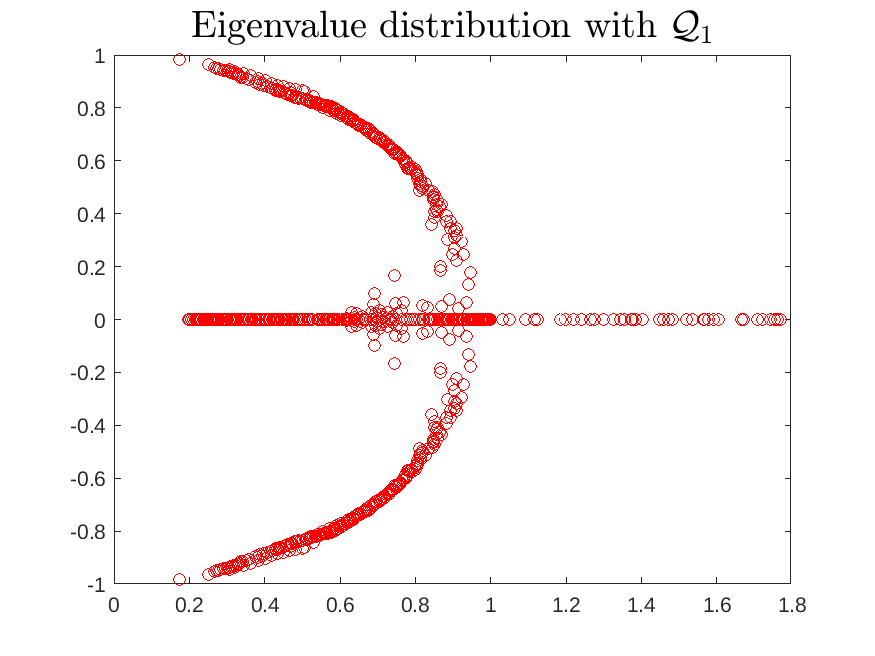}
\end{minipage}
\hspace{-7mm}
\begin{minipage}{6.4cm}
\includegraphics[width=6.4cm]{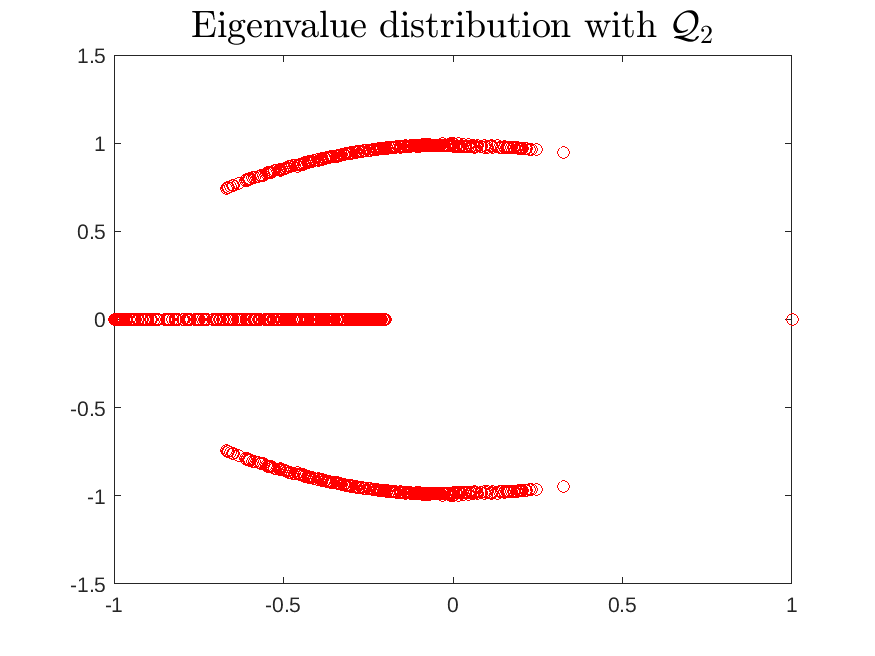}
\end{minipage}

\begin{minipage}{6.4cm}
\includegraphics[width=6.4cm]{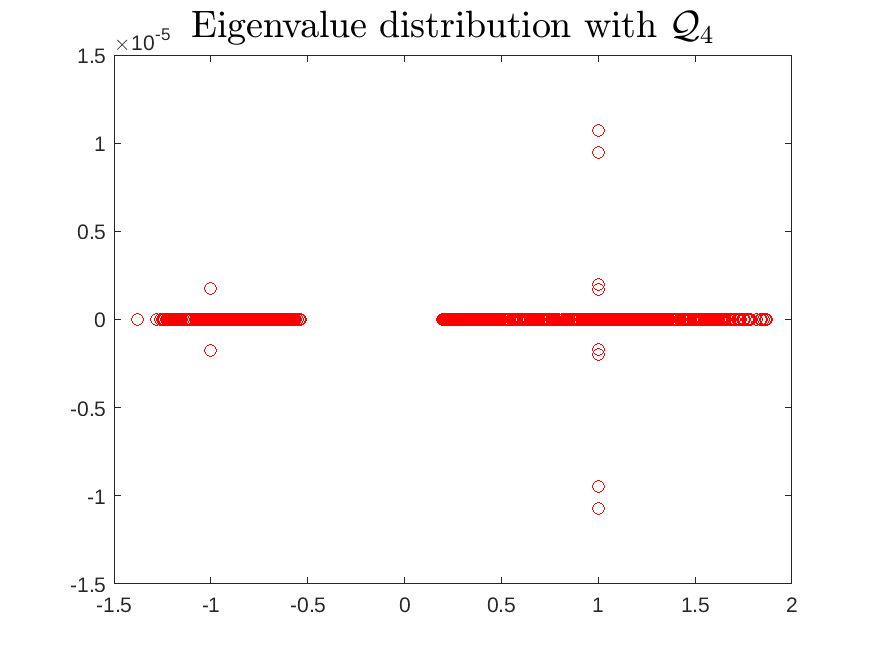}
\end{minipage}
\hspace{-7mm}
\begin{minipage}{6.4cm}
\includegraphics[width=6.4cm]{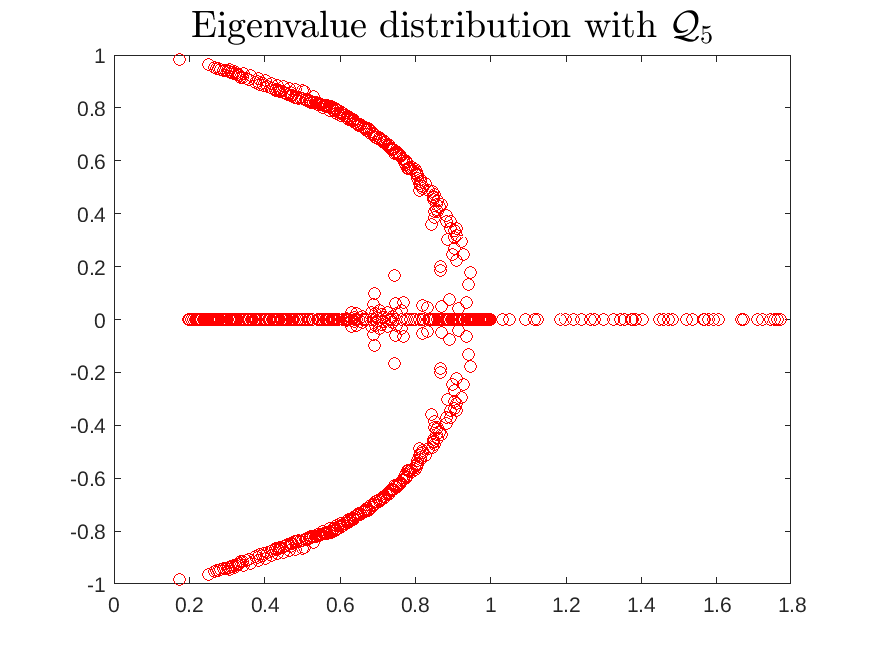}
\end{minipage}

\begin{minipage}{6.4cm}
\includegraphics[width=6.4cm]{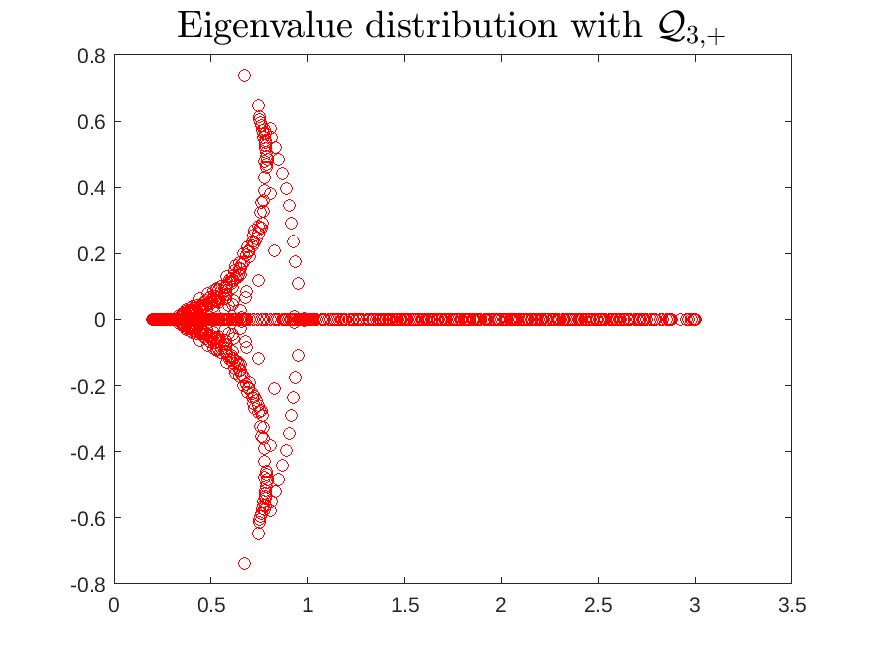}
\end{minipage}
\hspace{-7mm}
\begin{minipage}{6.4cm}
\includegraphics[width=6.4cm]{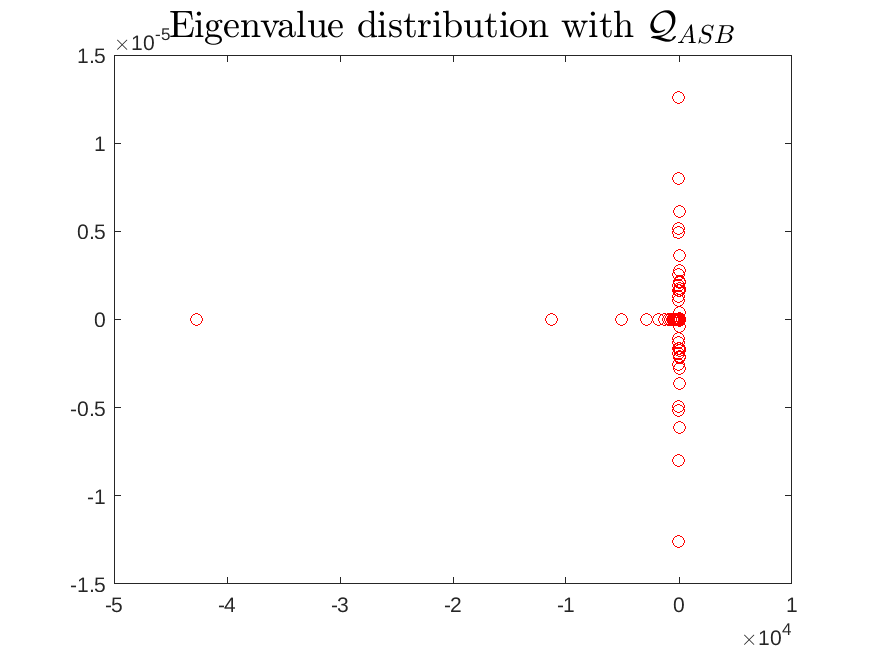}
\end{minipage}
\caption{Eigenvalue distribution of the preconditioned matrix with
	$p=16$ for  different preconditioners.}
\label{fig2-2}
\end{figure}

Figure \ref{fig2-2} plots the eigenvalue distribution of the preconditioned matrices with the approximation matrices $\widehat S$ and $\widehat X$.  We observe from this figure that the 
preconditioned matrix with $\Qa$ has more clustered eigenvalues than the other ones, which can considerably improve the convergence rate of the Krylov subspace iterative methods.
Preconditioners ${\mathcal Q}_1$  and ${\mathcal Q}_5$  display quite similar eigenvalue distributions 
(the ideal version have the same eigenvalues). We then remove ${\mathcal Q}_1$ from the numerical results
in view of its slightly higher application cost in comparison with ${\mathcal Q}_5$.
All other proposed preconditioners display also negative eigenvalues, predicting  slow GMRES convergence.
Finally, the spectral distribution with the ${\mathcal P}_{ASB}$ preconditioner, does not seem to be favorable, as it spreads
over a wide (real) interval.

The linear system with $\widehat S$ is solved exactly by solving two bidiagonal systems with $L_S$ and $L_S^T$,
while the system with $\widehat X$ is solved, without forming explicitly matrix $\widehat X$,  by the PCG method accelerated 
by the incomplete Cholesky factorization of $C \text{diag}(\widehat S)^{-1} C^T$ with a 
drop tolerance $\tau = 10^{-4}$.  The work to be done before the beginning of the FGMRES process is described in 
Algorithm \ref{Algo0} while the application of the preconditioner at each FGMRES iteration
is sketched in Algorithm \ref{Algo01}, for the 
preconditioner $\Qa$.

\begin{algorithm}[h!]
\begin{algorithmic}[1]
	\caption{Approximations of the blocks before the FGMRES solution. }
	\label{Algo0}
	\STATE $\widehat A = \text{diag}(A)$, 
	\STATE $\widehat{S} = \text{tridiag}(B \widehat{A}^{-1}B^T) $
	\STATE $L_S$ is the exact bidiagonal factor of $\widehat{S}$ such that $\widehat{S} = L_S L_S^T$.
	\STATE Compute $X_0 = C \text{diag}(\widehat S)^{-1} C^T$ and $M$ its IC factor with drop tolerance $10^{-4}$.
\end{algorithmic}
\end{algorithm}

\begin{algorithm}[H]
\begin{algorithmic}[1]
	\caption{Computation of $w = \Qainv r$. }
	\label{Algo01}
	\STATE Split vector $r$ into $r = \begin{bmatrix}r_1^T & r_2^T & r_3^T \end{bmatrix}^T$ \\[.1em]
	\STATE 
	Solve system $C L_S^{-T} L_S^{-1} C^T w_3 = r_3$ by PCG with preconditioner $M M^T$ and 
	$\texttt{tol}_{\texttt{PCG}} = 10^{-4}$\\[.1em]
	\STATE $v = C^T w_3 - r_2$ \\[.1em]
	\STATE $w_2 = L_S^{-T} L_S^{-1} v$ \\[.1em]
	\STATE $y = r_1 - B^T w_2$ \\[.1em]
	\STATE Solve $A w_1 = y$ \\[.1em]
	\STATE Define  $w = \begin{bmatrix}w_1^T & w_2^T & w_3^T \end{bmatrix}^T$
\end{algorithmic}
\end{algorithm}

\begin{table}[h!]
        \caption{Numerical results for example \ref{Ex2} with unitary exact solution.}
	\label{ones}
\begin{tabular}{lc|rrrrrrrr}
        &size     & 2080 & 8256   &  32896   &  131328  & 524800 & 2\,098176 & 8\,390656 \\
        &$p$      & 16 & 32   &  64   &  128 & 256 & 512 & 1024  \\
          \hline
	$\mathcal {P}_D$ 	&ITS&   79 &  126 &  132 &  128 &  121 &  117 & \dag \\
	&CPU&    0.15 &    0.83 &    3.28 &   13.25 &   49.90 &  165.06 & \dag \\
	&RES&  0.21e$-05$&  0.12e$-06$&  0.88e$-08$&  0.49e$-09$&  0.35e$-10$&  0.20e$-11 $ & \dag \\
	&ERR&  0.88e$-05$&  0.64e$-05$&  0.10e$-04$&  0.11e$-04$&  0.12e$-04$&  0.11e$-04 $ & \dag \\
	\hline
	$\mathcal {P}_3$ &ITS&   51 &   83 &   87 &   84 &   80 &   77 & \dag \\
	&CPU&    0.07 &    0.43 &    1.62 &    7.03 &   26.45 &   88.69 & \dag \\
	&RES&  0.19e$-05$&  0.12e$-06$&  0.81e$-08$&  0.53e$-09$&  0.34e$-10$&  0.19e$-11 $ & \dag \\
	&ERR&  0.68e$-05$&  0.60e$-05$&  0.10e$-04$&  0.11e$-04$&  0.12e$-04$&  0.11e$-04 $ & \dag \\
	\hline
	$\mathcal {Q}_2$ &ITS&   66 &   98 &  100 &   99 &   95 &   91 & \dag \\
	&CPU&    0.10  &    0.64 &    1.94 &    8.33 &   31.30 &  103.49 & \dag \\
	&RES&  0.21e$-05$&  0.13e$-06$&  0.92e$-08$&  0.57e$-09$&  0.34e$-10$&  0.22e$-11 $ & \dag \\
	&ERR&  0.66e$-05$&  0.64e$-05$&  0.97e$-05$&  0.10e$-04$&  0.10e$-04$&  0.10e$-04 $ & \dag \\
	\hline
	$\mathcal {Q}_4$ &ITS&   48 &   81 &   85 &   82 &   79 &   76 & \dag \\
	&CPU&    0.06 &    0.46 &    1.56 &    6.72 &   25.59 &   85.84 & \dag \\
	&RES&  0.22e$-05$&  0.14e$-06$&  0.88e$-08$&  0.57e$-09$&  0.35e$-10$&  0.20e$-11 $ & \dag \\
	&ERR&  0.64e$-05$&  0.67e$-05$&  0.10e$-04$&  0.12e$-04$&  0.11e$-04$&  0.10e$-04 $ & \dag \\
	\hline
	$\mathcal{Q}_5$ &ITS&   38 &   57 &   59 &   57 &   54 &   52 & 52 \\
	&CPU&    0.04 &    0.22 &    0.93 &    3.90 &   14.49 &   49.81 & 294.22 \\
	&RES&  0.21e$-05$&  0.14e$-06$&  0.86e$-08$&  0.53e$-09$&  0.34e$-10$&  0.19e$-11 $ & 0.12e$-12$\\
	&ERR&  0.59e$-05$&  0.68e$-05$&  0.10e$-04$&  0.11e$-04$&  0.12e$-04$&  0.10e$-04 $ & 0.11e$-04$\\
	\hline
	$\Qa$ &ITS&   30 &   44 &   46 &   45 &   43 &   41 & 39 \\
	&CPU&    0.03 &    0.17 &    0.64 &    2.68 &   10.40 &   35.52 & 201.87\\
	&RES&  0.19e$-05$&  0.14e$-06$&  0.86e$-08$&  0.47e$-09$&  0.33e$-10$&  0.20e$-11 $ & 0.12e$-12$ \\
	&ERR&  0.88e$-05$&  0.69e$-05$&  0.13e$-04$&  0.12e$-04$&  0.14e$-04$&  0.13e$-04 $ & 0.15e$-04$\\
	\hline
	$\mathcal {P}_{ASB}$ &ITS&   19 &   17 &   16 &   15 &   13 &   12 & 45 \\
	&CPU&    0.05 &    0.03 &    0.07 &    0.27 &    0.84 &    2.85 &  177.30 \\
	&RES&  0.15e$-05$&  0.12e$-06$&  0.82e$-08$&  0.38e$-09$&  0.30e$-10$&  0.16e$-11 $ & 0.13e$-12$ \\
	&ERR&  0.39e$-05$&  0.31e$-05$&  0.33e$-05$&  0.25e$-05$&  0.32e$-05$&  0.26e$-05 $ & 0.56e$-05$\\
	\hline
 \hline
         \multicolumn{9}{l}{\dag: GMRES memory overflow}
\end{tabular}
\end{table}

The numerical results corresponding to the block preconditioned FGMRES for Example \ref{Ex2} are given in Tables 
\ref{ones} (using the right hand side as $b = \mathcal A e$)  and \ref{random} where an exact random solution is employed.

We run FGMRES with all preconditioners for problems with $p = \{16, 32, 64, 128, 256, 512, 1024\}$ ending up with 
a problem with more than $8$ million unknowns. To obtain a relative error of (roughly) the same order of magnitude we adjusted
the tolerance as 
\[\texttt{tol} = \dfrac{10}{N^2}, \qquad N = n+m+l.\]

\begin{table}[h!]
	\caption{Numerical results for Example \ref{Ex2} with random exact solution.}
	\label{random}
\begin{tabular}{lc|rrrrrrrr}
	&size	  & 2080 & 8256   &  32896   &  131328  & 524800 & 2\,098176 & 8\,390656 \\
	&$p$	  & 16 & 32   &  64   &  128 & 256 & 512 & 1024  \\
	  \hline
	$\mathcal{P}_{D}$  &ITS &     89 &  147 &  156 &  156 &  152 &  150 & \dag \\
	&CPU & 0.13 &    0.90 &    4.31 &   16.58 &   61.67 &  257.74& \dag  \\
	&RES & 0.20e$-05$ &  0.14e$-06$ &  0.85e$-08$ &  0.51e$-09$ &  0.35e$-10$ &  0.21e$-11$& \dag  \\
	&ERR &   0.83e$-05$ &  0.71e$-05$ &  0.73e$-05$ &  0.73e$-05$ &  0.87e$-05$ &  0.83e$-05$ & \dag \\
   \hline
	$\mathcal{P}_{3}$  &ITS & 58 &   97 &  103 &  101 &  100 &   97 & \dag \\
	&CPU & 0.06 &    0.48 &    1.98 &    8.32 &   32.34 &  128.20 &\dag  \\
	&RES &0.20e$-05$ &  0.11e$-06$ &  0.83e$-08$ &  0.57e$-09$ &  0.33e$-10$ &  0.22e$-11$& \dag  \\
	&ERR &0.87e$-05$ &  0.51e$-05$ &  0.72e$-05$ &  0.80e$-05$ &  0.78e$-05$ &  0.84e$-05$& \dag  \\
   \hline
	$\mathcal{Q}_2$&ITS & 74 &  114 &  118 &  117 &  114 &  112 & \dag \\
	&CPU & 0.07 &    0.61 &    2.21 &    9.67 &   38.40 &  146.92 & \dag \\
	&RES & 0.21e$-05$ &  0.13e$-06$ &  0.87e$-08$ &  0.52e$-09$ &  0.35e$-10$ &  0.22e$-11$& \dag  \\
	&ERR &0.59e$-05$ &  0.57e$-05$ &  0.71e$-05$ &  0.70e$-05$ &  0.81e$-05$ &  0.83e$-05$ & \dag \\

   \hline
	$\mathcal{Q}_4$&ITS & 55 &   96 &  103 &  103 &  100 &   98& \dag  \\
	&CPU & 0.06 &    0.50 &    1.93 &    8.46 &   33.27 &  126.98& \dag  \\
	&RES & 0.22e$-05$ &  0.14e$-06$ &  0.79e$-08$ &  0.47e$-09$ &  0.34e$-10$ &  0.22e$-11$& \dag  \\
	&ERR &0.67e$-05$ &  0.63e$-05$ &  0.66e$-05$ &  0.65e$-05$ &  0.79e$-05$ &  0.80e$-05$& \dag  \\
   \hline
	$\mathcal{Q}_5$&ITS & 43 &   66 &   69 &   68 &   66 &   65  & 60\\
	&CPU & 0.04 &    0.29 &    1.01 &    4.70 &   18.38 &   70.22 & 450.92  \\
	&RES &    0.18e$-05$ &  0.12e$-06$ &  0.80e$-08$ &  0.50e$-09$ &  0.35e$-10$ &  0.22e$-11$ & 0.14e$-12$ \\
	&ERR &0.57e$-05$ &  0.54e$-05$ &  0.67e$-05$ &  0.69e$-05$ &  0.83e$-05$ &  0.86e$-05$ & 0.85e$-05$\\
   \hline
	$\Qa$&ITS & 33 &   51 &   54 &   53 &   52 &   52  & 51 \\
	&CPU & 0.03 &    0.21 &    0.69 &    3.14 &   13.17 &   49.47  & 301.68\\
	&RES & 0.20e$-05$ &  0.13e$-06$ &  0.75e$-08$ &  0.49e$-09$ &  0.33e$-10$ &  0.18e$-11$ & 0.18e$-12$ \\
	&ERR &0.11e$-04$ &  0.57e$-05$ &  0.65e$-05$ &  0.73e$-05$ &  0.81e$-05$ &  0.71e$-05$ & 0.78e$-05$\\
   \hline
	$\mathcal{P}_{ASB}$&ITS & 66 &   98 &  113 &  117 &  117 &  116& \dag  \\
	&CPU & 0.06 &    0.41 &    1.67 &    7.54 &   29.38 &  119.92& \dag  \\
	&RES & 0.22e$-05$ &  0.14e$-06$ &  0.77e$-08$ &  0.56e$-09$ &  0.32e$-10$ &  0.21e$-11$& \dag  \\
	&ERR &0.65e$-05$ &  0.96e$-05$ &  0.16e$-04$ &  0.22e$-04$ &  0.21e$-04$ &  0.22e$-04$ & \dag \\
   \hline
	 \multicolumn{9}{l}{\dag: GMRES memory overflow}
\end{tabular}
\end{table}

From these tables, we see that the preconditioners $\mathcal{Q}_{1}$, 
$\mathcal{Q}_5$ and, particularly,  $\Qa$ outperform the 
other ones in terms of iteration number and CPU time, being all the proposed preconditioners more convenient
than $\mathcal{P}_{D}$
and obtaining FGMRES convergence to the solution of \eqref{Eq1} in  a reasonable number
of iterations and CPU time. For the largest problem and random right-hand-side, only preconditioners
$\Qa$ and $\mathcal{Q}_5$ could solve the given linear system within the memory of our laptop, due do the small
number of iterations they required. 

Regarding the $\mathcal{P}_{ASB}$ preconditioner we observe that it is the most performing one when $w \equiv e$, yet
not providing convergence in two iterations since its exact version is employed, whereas it reveals not competitive
with $\Qa$ for a random exact solution.

In the next section, we will discuss in detail the eigenvalue distribution of the preconditioner matrix using the inexact version of $\Qa$ and we leave other inexact version of the
proposed preconditioners as a topic for further research. Regarding the complex
eigenvalues, we perform an analysis similar to the one in~\cite{Simoncini}, but generalized here for the $3\times 3$ block triangular preconditioner.

\section{Eigenvalue analysis of the  inexact variants of $\Qa$ \label{Csec}}
We analyze in this section the eigenvalue distribution  of the preconditioned matrix 
$\mathcal{A}\bar{\mathcal{Q}}^{-1}$, where, in the sequel,
\begin{equation}\label{Eq29}
\bar{\mathcal{Q}}\equiv \Qa =\begin{pmatrix} \widehat A &B^{T}&0\\0&-\widehat S &C^{T}\\0&0& \widehat X\end{pmatrix},
\end{equation}
with $ \widehat A, \widehat S$ and $\widehat X$ proper SPD approximations (preconditioners) of $A$, $S$ and $X$, respectively. 

The relevant  spectral properties of the preconditioned matrix $\mathcal{A}\bar{\mathcal{Q}}^{-1}$ will be given in terms of the eigenvalues of 
$\widehat A^{-1} A, \widehat S^{-1} \tilde{S}$ and $\widehat X^{-1} \tilde{X}$ where $\tilde{S} =  B \widehat A^{-1} B^T$ and $\tilde{X} = C \widehat S^{-1} C^T$. To this aim, we define
\begin{eqnarray}  
\gamma_{\min}^A  \equiv \lambda_{\min} (\widehat A^{-1} A), &\qquad & \gamma_{\max}^A  \equiv \lambda_{\max} (\widehat A^{-1} A), \qquad \gamma_A \in [ \gamma_{\min}^A,  \gamma_{\max}^A ],  \\
\gamma_{\min}^{S}  \equiv \lambda_{\min} (\widehat S^{-1} \tilde{S}), &\qquad & \gamma_{\max}^{S}  \equiv \lambda_{\max} (\widehat S^{-1} \tilde{S}), \qquad \gamma_{S} \in [ \gamma_{\min}^{S},  \gamma_{\max}^{S} ],  \\
\gamma_{\min}^{X}  \equiv \lambda_{\min} (\widehat X^{-1} \tilde{X}), &\qquad & \gamma_{\max}^{X} \equiv \lambda_{\max} (\widehat X^{-1} \tilde{X}), \qquad \gamma_{X} \in [ \gamma_{\min}^{X},  \gamma_{\max}^{X} ].  
\end{eqnarray}
We will finally make the assumption that $ 1\in [\gamma_{\min}^A, \gamma_{\max}^A$]. This assumption, very commonly satisfied in
practice, will simplify some of the bounds mostly regarding  real eigenvalues.

\noindent
Let 
\begin{equation}\label{Eq30}
\bar{\mathcal{D}}=\begin{pmatrix}\widehat A&0&0\\0&\widehat S&0\\0&0&\widehat X\end{pmatrix}.
\end{equation}
Then finding the eigenvalues of $\mathcal{A}\bar{\mathcal{Q}}^{-1}$ is equivalent to solving
$$\bar{\mathcal{D}}^{-\frac{1}{2}}\mathcal{A}\bar{\mathcal{D}}^{-\frac{1}{2}}\mathbf{w}=\lambda \bar{\mathcal{D}}^{-\frac{1}{2}}\bar{\mathcal{Q}}\bar{\mathcal{D}}^{-\frac{1}{2}}\bold{w},$$
or 
\begin{equation}\label{Eq31}
\begin{pmatrix}\tilde{A}&R^T&0\\R&0&K^T\\0&K&0\end{pmatrix}\begin{pmatrix}x\\y\\z  \end{pmatrix}=\lambda \begin{pmatrix}I&R^T&0\\0&-I&K^T\\0&0&I\end{pmatrix}
\begin{pmatrix}x\\y\\z  \end{pmatrix},
\end{equation}
where $\tilde{A}=\widehat A^{-\frac{1}{2}}A\widehat A^{-\frac{1}{2}},\: R =\widehat S^{-\frac{1}{2}}B\widehat A^{-\frac{1}{2}}$
and $K = \widehat X^{-\frac{1}{2}}C\widehat S^{-\frac{1}{2}}$. 

\begin{Theorem}\label{Th6}
Suppose that $A \in \mathbb{R}^{n \times n}$ is symmetric positive definite and $B \in \mathbb{R}^{m \times n}$ and $C \in \mathbb{R}^{l \times m}$ are matrices with full row rank. Let $\widehat A, \widehat S$ and $\widehat X$ be the symmetric positive approximations of $A, S$ and $X$, respectively. 
Assume that $\lambda$ is an eigenvalue of the preconditioned matrix $\mathcal{A}\bar{\mathcal{Q}}^{-1}$ and $(x; y; z)$ is the corresponding eigenvector.
If $\Im(\lambda) \neq 0$, then $\lambda$ satisfies  
\begin{eqnarray*}
	|\lambda-1| &<  &\sqrt{1-\gamma^A_{\min}},  \qquad  \qquad \text{if} \ Ky = 0, \\
	|\lambda-1|  &\le & \sqrt{ 1 -\gamma^A_{\min} \frac{\|x\|^2}{\|y\|^2}}, \qquad  \text{if} \ Ky \ne 0. 
\end{eqnarray*}
\end{Theorem}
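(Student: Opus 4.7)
The plan is to work with the scaled generalized eigenvalue problem \eqref{Eq31}, since the eigenvalues of $\mathcal{A}\bar{\mathcal{Q}}^{-1}$ coincide with those of the pencil appearing there. Splitting into blocks gives
\begin{align*}
\tilde A x + R^T y &= \lambda x + \lambda R^T y, \\
R x + K^T z &= -\lambda y + \lambda K^T z, \\
K y &= \lambda z.
\end{align*}
A complex eigenvalue cannot equal $0$, so from the third equation I set $z = Ky/\lambda$ and substitute into the second, producing
\[
Rx = -\lambda y + \frac{\lambda-1}{\lambda}\, K^T K y.
\]
The first equation becomes $\tilde A x = \lambda x + (\lambda-1) R^T y$.

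Next I would left-multiply the first equation by $x^*$ and the second by $y^*$. Denoting $\alpha = y^* R x$, so that $x^* R^T y = \bar\alpha$, and $a = x^*\tilde A x \in \mathbb R$ (recall $\tilde A$ is SPD, hence Hermitian), these reduce to the two scalar identities
\[
a = \lambda\|x\|^2 + (\lambda-1)\bar\alpha, \qquad \alpha = -\lambda\|y\|^2 + \frac{\lambda-1}{\lambda}\,\|Ky\|^2 .
\]
Conjugating the second and substituting into the first eliminates $\alpha$, yielding a single scalar identity whose left-hand side $a$ is real. The strategy is therefore to exploit the vanishing of its imaginary part.

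Using $\Im(1/\bar\lambda) = \Im(\lambda)/|\lambda|^2$ and dividing by the nonzero factor $\Im(\lambda)$, the imaginary part collapses to
\[
\|x\|^2 - \|y\|^2 + \frac{|\lambda-1|^2}{|\lambda|^2}\,\|Ky\|^2 = 0.
\]
Multiplying this by $\Re(\lambda)$ and subtracting from the real part (after a line of algebra) eliminates the $\|x\|^2$ and $\|Ky\|^2$ terms and leaves
\[
a = \bigl(2\Re(\lambda) - |\lambda|^2\bigr)\|y\|^2,
\]
which rearranges to
\[
|\lambda-1|^2 = 1 - \frac{x^*\tilde A x}{\|y\|^2}.
\]
In the case $Ky\ne 0$, invoking the Rayleigh bound $x^*\tilde A x \ge \gamma_{\min}^A \|x\|^2$ (valid because $\tilde A$ and $\widehat A^{-1}A$ are similar) yields the second claimed inequality. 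In the case $Ky = 0$, the imaginary-part identity forces $\|x\|^2 = \|y\|^2$, so the same formula gives $|\lambda-1|^2 = 1 - x^*\tilde A x/\|x\|^2 \le 1 - \gamma_{\min}^A$; the strict inequality follows because the Rayleigh quotient equals $\gamma_{\min}^A$ only when $x$ is a corresponding real eigenvector, a configuration that can be shown to force $\lambda$ to be real and hence is incompatible with $\Im(\lambda)\ne 0$.

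The main obstacle is the real/imaginary decomposition: one must keep careful track of $\lambda$, $\bar\lambda$, and $1/\bar\lambda$ when substituting, and identify the precise linear combination of the real and imaginary part equations that cancels the $\|x\|^2$ and $\|Ky\|^2$ contributions so that only $\|y\|^2$ remains in the denominator. Once this cancellation is seen, the bounds drop out immediately from a Rayleigh quotient argument on $\tilde A$.
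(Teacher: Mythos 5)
Your derivation is sound and arrives at exactly the same central identity as the paper, namely
\[
|\lambda-1|^2 \;=\; 1-\frac{x^*\tilde A x}{\|y\|^2},
\]
but by a leaner route. The paper normalizes the eigenvector so that $\|x\|^2+\|y\|^2+\|z\|^2=1$, multiplies the three block relations on the left by $x^*$, $y$ and $z^*$ respectively, introduces the auxiliary quantity $\rho=|\lambda-1|^2-1$, and separates real and imaginary parts; it also treats $Ky=0$ and $Ky\ne 0$ as two entirely distinct cases with separate algebra. You instead use $\lambda\ne 0$ to eliminate $z=Ky/\lambda$ at the outset, so that $\|Ky\|^2$ appears as a single nonnegative scalar, work with the off-diagonal quantity $\alpha=y^*Rx$, and never need to normalize the eigenvector; $\rho$ enters implicitly as $|\lambda|^2-2\Re(\lambda)$. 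Both cases then fall out of the same imaginary-part identity ($\|x\|=\|y\|$ when $Ky=0$), which is a genuine small simplification over the paper's two-track argument.

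One caveat: the final sentence, asserting that equality of the Rayleigh quotient with $\gamma^A_{\min}$ \emph{forces} $\lambda$ to be real, is not correct, so the upgrade from $\le$ to $<$ in the $Ky=0$ case is not established. A concrete counterexample: take $\widehat A$ with $A=\gamma\widehat A$, $\gamma<1$, so $\tilde A=\gamma I$ and every $x$ attains the minimal Rayleigh quotient, and take $\widehat S=B\widehat A^{-1}B^T$ so that $RR^T=I$. Any $y\in\ker K$ then satisfies the $Ky=0$ block equations with $\lambda$ a root of $\lambda^2-(\gamma+1)\lambda+1=0$; for $\gamma<1$ these roots are complex and satisfy $|\lambda-1|^2=1-\gamma$ exactly. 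Hence the bound is attainable and the strict inequality in the theorem's first display is in fact an overstatement. The paper's own proof also only establishes $\le$ before quietly writing $<$ in the statement, so you are in good company, but the sentence claiming to justify strictness should be deleted rather than left as an unproved (and false) assertion.
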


\begin{proof}
Let $\lambda$ be an eigenvalue of matrix $\mathcal{A}\bar{\mathcal{Q}}^{-1}$ and $(x; y; z)$ be the corresponding eigenvector such that $\|x\|^2+\|y\|^2+\|z\|^2=1$.
It follows from \eqref{Eq31} that 
\begin{align}
&\tilde{A}x-\lambda x=(\lambda -1) R^Ty, \label{Eq32}\\
&Rx-(\lambda -1)K^Tz=-\lambda y, \label{Eq33}\\
&Ky=\lambda z. \label{Eq34}
\end{align}
Since $\widehat A, \widehat S$ and $\widehat X$ are SPD, and $B$ and $C$ are matrices with full row rank, then $\tilde{A}$ is SPD and $K$ and $R$ are matrices with full row rank. 
If $y=0$, from \eqref{Eq32} we can derive $\tilde{A}x=\lambda x$. Thus, it can be deduced that $\lambda \in [\gamma^A_{\min},\gamma^A_{\max}]$ and then the  eigenvalue $\lambda$ is real. The associated eigenvector for this case is of the form $(x; 0; 0)$ where $x \ne 0$.
Assume now that $\lambda \ne 1$ and $y \ne 0$. The rest of the proof is divided into two cases:

\textbf{Case I.} $Ky=0$. From \eqref{Eq34}, we obtain $z=0$.
Multiplying \eqref{Eq32} by $x^*$ on the left and the transposed conjugate of \eqref{Eq33} by $y$ on the right, we get
\begin{align}
&x^*\tilde{A}x-\lambda \|x\|^2 =(\lambda -1)x^* R^Ty, \label{Eq35}\\
&x^*R^Ty=-\bar{\lambda} \|y\|^2. \label{Eq36}
\end{align}
Inserting \eqref{Eq36} into equation \eqref{Eq35}, we have
\begin{equation}\label{Eq37}
x^*\tilde{A}x-\lambda+(\lambda-\bar{\lambda})\|y\|^2+|\lambda|^2\|y\|^2 =0.
\end{equation}
Let $\lambda = a + ib$, then taking the real and imaginary parts of \eqref{Eq37} apart, we obtain
\begin{align}
&x^*\tilde{A}x-a+(a^2+b^2)\|y\|^2=0, \label{Eq38}\\
&b(2\|y\|^2-1)=0. \label{Eq39}
\end{align}
From \eqref{Eq39}, we have $b=0$ or $\|y\|^2=\frac{1}{2}$. We assume that $b\neq 0$. From \eqref{Eq37} and after some simple calculations, we have
\begin{equation}\label{Eq40}
2x^*\tilde{A}x-\lambda -\bar{\lambda}+|\lambda|^2=0.
\end{equation}\label{Eq41}
Using identity $|\lambda|^2-\lambda-\bar{\lambda}=|\lambda-1|^2-1$, we obtain $|\lambda-1|^2=1-2x^*\tilde{A}x$. If $1-\gamma^A_{\min} \geq 0$, we deduce that
\begin{equation*}\label{Eq42}
|\lambda-1|^2 \leq 1-\gamma^A_{\min},
\end{equation*}
which implies that 
\begin{equation*}\label{Eq43}
1-\sqrt{1-\gamma^A_{\min}} \leq \Re(\lambda) \leq 1+\sqrt{1-\gamma^A_{\min}}.
\end{equation*}
If $1-\gamma^A_{\min} < 0$, therefore there exists no $\lambda$ with nonzero imaginary part satisfying the equality in \eqref{Eq40}.

\textbf{Case II.} $Ky \neq 0$. Multiplying \eqref{Eq32} by $x^*$ on the left, the transposed conjugate of \eqref{Eq33} by $y$ on the right and \eqref{Eq34} by $z^*$ on the left, we derive
\begin{align}
&x^*\tilde{A}x-\lambda \|x\|^2=(\lambda -1)x^* R^Ty, \label{Eq44}\\
&x^*R^Ty-(\bar{\lambda}-1)z^*Ky=-\bar{\lambda} \|y\|^2, \label{Eq45}\\
&z^*Ky=\lambda \|z\|^2.\label{Eq46}
\end{align}
Inserting \eqref{Eq46} and \eqref{Eq45} into equation \eqref{Eq44} and easy manipulations, we get
\begin{equation}\label{Eq47}
x^*\tilde{A}x-\lambda(\|x\|^2+\|z\|^2)+(\lambda^2+|\lambda|^2-\lambda |\lambda|^2) \|z\|^2+\bar{\lambda}(\lambda-1)\|y\|^2=0.
\end{equation}
Using identity $\|x\|^2+\|z\|^2=1-\|y\|^2$, the above expression becomes
\begin{equation}\label{Eq48}
x^*\tilde{A}x-\lambda+(\lambda-\bar{\lambda}+|\lambda|^2)\|y\|^2+(\lambda^2-\lambda |\lambda|^2+|\lambda|^2)\|z\|^2=0,
\end{equation}
and can be equivalently written as
\begin{equation} 
x^* \tilde A x  - \lambda + \left(\lambda - \bar \lambda + |\lambda|^2\right) \|y\|^2 - 
\lambda \left(|\lambda|^2 - (\lambda + \bar \lambda)\right) \|z\|^2  = 0.
\end{equation}
In case of complex eigenvalues, we will show that the  real quantity
\begin{equation}\label{Eq50}
\rho = |\lambda|^2 - (\lambda + \bar \lambda) = |\lambda - 1|^2 -1 = |\lambda|^2 - 2a,
\end{equation}
is always negative, showing that the complex eigenvalues lie in an open circle with center $(1,0)$ and prescribed radius.
Let us write \eqref{Eq48}, exploiting the real and imaginary part,
\begin{align}
x^* \tilde A x -a + |\lambda|^2 \|y\|^2 - a \rho \|z\|^2&=0, \label{real}\\
b(-1 + 2 \|y\|^2 - \rho \|z\|^2)&=0.\label{imag}
\end{align}
If $\lambda$ is complex, then $b \ne 0$ and from (\ref{imag})  we obtain
\begin{equation} 
\|z\|^2 = \frac{2\|y\|^2 -1}{\rho}, \label{imag01} 
\end{equation}
and substituting it in (\ref{real}) we have
\[ 0 = x^* \tilde A x - a + |\lambda|^2 \|y\|^2 - a (2\|y\|^2 -1) = x^* \tilde A x + \|y\|^2(|\lambda|^2 - 2a) = x^* \tilde A x + \|y\|^2 \rho,\]
from which
\begin{equation}\label{Eq51}
\rho = -\frac{x^* \tilde A x}{\|y\|^2}.
\end{equation}
We can rewrite \eqref{Eq51} as
 \[ \rho = - \gamma_A \frac{\|x\|^2}{\|y\|^2} \le - \gamma^A_{\min} \frac{\|x\|^2}{\|y\|^2}, \qquad \gamma_A=\frac{x^* \tilde A x}{\|x\|^2}. \]
which together with \eqref{Eq50} completes the proof of the theorem.
\end{proof}

In the following, our aim is to characterize the real eigenvalues of the preconditioned matrix
not lying in $[\gamma^A_{\min},\gamma^A_{\max}]$. To this end, we premise two technical lemmas which will be useful for our analysis.
\begin{Lemma}(\cite{Bergamaschi})\label{Le1}
Let $\lambda \notin  [\gamma^A_{\min},\gamma^A_{\max}]$. Then for arbitrary $z \neq 0$, there exists a vector $s \neq 0$ such that 
\begin{equation*}
\frac{z^T(\tilde{A}-\lambda I)^{-1}z}{z^Tz}=\Big(\frac{s^T\tilde{A}s}{s^Ts}-\lambda \Big)^{-1}=(\gamma_A-\lambda)^{-1},
\end{equation*}
where $\gamma_A=\dfrac{s^T\tilde{A}s}{s^Ts}$.
\end{Lemma}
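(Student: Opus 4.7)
The plan is to diagonalize $\tilde A$ and reduce the claim to a statement about weighted averages of the function $\mu \mapsto 1/(\mu - \lambda)$ over the spectrum of $\tilde A$. Since $\tilde A$ is SPD, write $\tilde A = Q \Lambda Q^T$ with $\Lambda = \operatorname{diag}(\mu_1, \dots, \mu_n)$ and $\mu_i \in [\gamma^A_{\min}, \gamma^A_{\max}]$. Setting $\alpha = Q^T z / \|z\|$ so that $\sum_i \alpha_i^2 = 1$, the left-hand side of the claim becomes
\[
v := \frac{z^T(\tilde A - \lambda I)^{-1} z}{z^T z} = \sum_{i=1}^n \frac{\alpha_i^2}{\mu_i - \lambda},
\]
which is well defined because $\lambda \notin [\gamma^A_{\min}, \gamma^A_{\max}]$ guarantees $\mu_i - \lambda \ne 0$ for every $i$.

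Next I would show that $v$ can be written as $1/(\gamma_A - \lambda)$ for some $\gamma_A \in [\gamma^A_{\min}, \gamma^A_{\max}]$. Since all factors $\mu_i - \lambda$ share the same sign, the map $\mu \mapsto 1/(\mu - \lambda)$ is continuous and monotone on $[\gamma^A_{\min}, \gamma^A_{\max}]$, so $v$ is a convex combination of values lying between $1/(\gamma^A_{\min} - \lambda)$ and $1/(\gamma^A_{\max} - \lambda)$. A short case split on the sign of $\lambda - \gamma^A_{\max}$ (to track the direction of the inequality when taking reciprocals of possibly negative quantities) then yields $\gamma_A := \lambda + 1/v \in [\gamma^A_{\min}, \gamma^A_{\max}]$, which rewrites $v$ as $(\gamma_A - \lambda)^{-1}$.

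Finally, to exhibit the vector $s$, I would use the fact that the Rayleigh quotient $s \mapsto s^T \tilde A s / s^T s$ is continuous on the unit sphere and, restricted to unit-norm convex combinations of normalized eigenvectors of $\tilde A$ associated with $\gamma^A_{\min}$ and $\gamma^A_{\max}$, sweeps out the entire interval $[\gamma^A_{\min}, \gamma^A_{\max}]$ by the intermediate value theorem. Picking such an $s$ with Rayleigh quotient exactly $\gamma_A$ closes the chain of equalities.

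The only mildly delicate step is the second one: the inequality manipulations must distinguish the subcases $\lambda < \gamma^A_{\min}$ (all $\mu_i - \lambda$ positive) and $\lambda > \gamma^A_{\max}$ (all $\mu_i - \lambda$ negative), so as to invert the resulting bounds in the right direction. Apart from this bookkeeping, the argument is a direct consequence of the spectral theorem and the intermediate value property of the Rayleigh quotient.
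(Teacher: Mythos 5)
The paper states this lemma without proof, attributing it to the cited reference \cite{Bergamaschi}, so there is no in-paper proof to compare against. Your argument is correct and is the standard one: diagonalize $\tilde A$, recognize the left-hand side as a convex combination $\sum_i \alpha_i^2/(\mu_i-\lambda)$ of values of a monotone function on $[\gamma^A_{\min},\gamma^A_{\max}]$, observe that all summands share the same sign (so the combination is nonzero and its reciprocal shifted by $\lambda$ lands back in the spectral interval), and then invoke the intermediate-value property of the Rayleigh quotient to produce the vector $s$; the sign bookkeeping you flag in the two subcases $\lambda<\gamma^A_{\min}$ and $\lambda>\gamma^A_{\max}$ works out exactly as you describe.
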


\begin{Lemma}\label{Le2}
Let $p(x)$ be the polynomial defined as
\[p(x)=x^3-a_1x^2+a_2x-a_3, \qquad a_j >0, \ j=1, 2, 3,\]
and let $a = \min\{a_1,\dfrac{a_3}{a_2}\}$ and $b = \max\{a_1,\dfrac{a_3}{a_2}\}$. Then $p(x) < 0, \: \forall x \in (0,a)$ and $p(x) > 0, \: \forall  x > b.$
\end{Lemma}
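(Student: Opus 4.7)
The plan is to exploit a natural two-term decomposition of $p(x)$ that exposes the two thresholds $a_1$ and $a_3/a_2$ simultaneously. Namely, I would write
\[
p(x) \;=\; x^{3} - a_{1}x^{2} + a_{2}x - a_{3}
\;=\; x^{2}(x-a_{1}) + a_{2}\!\left(x-\tfrac{a_{3}}{a_{2}}\right),
\]
which is valid because $a_{2}>0$ so dividing $a_{3}$ by $a_{2}$ is legal.

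With this decomposition the result is essentially immediate, and I would just verify the two sign patterns. First, for $x > b = \max\{a_{1},\,a_{3}/a_{2}\}$, both factors $x-a_{1}$ and $x - a_{3}/a_{2}$ are strictly positive; combined with $x^{2}>0$ and $a_{2}>0$, each of the two summands is strictly positive, so $p(x) > 0$. Second, for $0 < x < a = \min\{a_{1},\,a_{3}/a_{2}\}$, both factors $x-a_{1}$ and $x - a_{3}/a_{2}$ are strictly negative; since $x^{2}>0$ and $a_{2}>0$, each summand is strictly negative, so $p(x) < 0$.

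There is no real obstacle here; the only mild subtlety is recognising the right way to split the polynomial so that the two critical quantities $a_{1}$ and $a_{3}/a_{2}$ appear as roots of separate, manifestly sign-definite pieces. Once that grouping is written down the argument is just a sign check, and the hypothesis $a_{j}>0$ for $j=1,2,3$ is used only to ensure that $a_{2}$ is invertible and that $x^{2}$ and $a_{2}$ contribute positive multiplicative factors. Note that the lemma says nothing about the sign of $p$ on the interval $[a,b]$, which is consistent with the decomposition, since on that interval the two summands have opposite signs and their sum is not determined by this argument alone.
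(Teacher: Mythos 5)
Your proof is correct and uses essentially the same decomposition as the paper, namely splitting $p(x)$ into $x^{2}(x-a_{1})$ and $a_{2}x-a_{3}$ and checking that both summands share the same sign outside $[a,b]$. The two arguments are materially identical.
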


\begin{proof}
The statement of the lemma comes from observing that $p(x)$ is the sum of the term $x^3 - a_1x^2$
which is negative in $(0, a_1)$ and positive for $x > a_1$ and of the term $a_2x - a_3$ which is increasing and changes sign once for $x = \dfrac{a_3}{a_2}.$
\end{proof}

Let, as in the previous lemma,	$\gamma_A=\dfrac{s^T\tilde{A}s}{s^Ts}$ and define 
$\gamma_{S}=\dfrac{y^TRR^Ty}{y^Ty} = \dfrac{y^T\widehat S^{-1/2} \tilde{S} \widehat S^{-1/2}y}{y^T y}$, hence $\gamma_{S} \in  [\gamma^{S}_{\min}, \gamma^{S}_{\max}]$
and $\gamma_{X}=\dfrac{z^TKK^Tz}{z^Tz}=\dfrac{z^T\widehat S^{-1/2} \tilde{X} \widehat S^{-1/2}z}{z^T z}  \in  [\gamma^X_{\min}, \gamma^X_{\max}]$.
We are now able to bound the real eigenvalues of the preconditioned matrix $\mathcal{A}\bar{\mathcal{Q}}^{-1}$. We split the main
theorem considering two cases $K y = 0$ and $K y \ne 0$.

\begin{Theorem}
\label{Theorem_complex}
If $Ky = 0$, then the
real 	eigenvalues of the preconditioned matrix  not lying in 
$[\gamma^A_{\min},\gamma^A_{\max}]$ satisfy
\begin {eqnarray*}  \lambda^2-(\gamma_A+\gamma_{S})\lambda+\gamma_{S}  =  0.
\end{eqnarray*}
Moreover the following synthetic bound holds:
\begin{equation}  
	\label{boundsKy0}
	\min\left\{ \gamma^A_{\min}, \frac{\gamma^S_{\min}}{\gamma^A_{\max}+\gamma^S_{\min}} \right\}  \leq  \lambda \leq \gamma^A_{\max}+\gamma^S_{\max}.
\end{equation}  
\end{Theorem}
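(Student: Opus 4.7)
The plan is to start from the reduced eigenvalue system \eqref{Eq32}--\eqref{Eq34}. When $Ky=0$, equation \eqref{Eq34} gives $\lambda z = 0$; since both $\mathcal{A}$ and $\bar{\mathcal{Q}}$ are nonsingular $\lambda \ne 0$, so $z = 0$. The system collapses to $(\tilde A - \lambda I) x = (\lambda - 1) R^T y$ and $R x = -\lambda y$. Because $\lambda \notin [\gamma^A_{\min}, \gamma^A_{\max}]$ and $\tilde A$ is SPD, $\tilde A - \lambda I$ is invertible, so I solve $x = (\lambda - 1)(\tilde A - \lambda I)^{-1} R^T y$ and substitute into the second equation. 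Multiplying on the left by $y^*$ gives
\[
(\lambda - 1)\, y^* R (\tilde A - \lambda I)^{-1} R^T y = -\lambda \|y\|^2.
\]

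Next I would apply Lemma \ref{Le1} with the vector $R^T y$ (nonzero since $R$ has full row rank and $y \ne 0$) to obtain
\[
y^* R (\tilde A - \lambda I)^{-1} R^T y = \frac{\|R^T y\|^2}{\gamma_A - \lambda}
\]
for some $\gamma_A \in [\gamma^A_{\min}, \gamma^A_{\max}]$. Writing $\|R^T y\|^2 = \gamma_S \|y\|^2$ with $\gamma_S \in [\gamma^S_{\min}, \gamma^S_{\max}]$ and clearing denominators yields $(\lambda-1)\gamma_S = -\lambda(\gamma_A - \lambda)$, i.e.\ the claimed quadratic $\lambda^2 - (\gamma_A + \gamma_S)\lambda + \gamma_S = 0$.

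For the synthetic bound I would invoke Vieta's formulas: the two roots $\lambda_\pm$ (which are real since a real eigenvalue solves the equation) are both positive (sum and product positive), so $\lambda_+ \le \gamma_A + \gamma_S \le \gamma^A_{\max} + \gamma^S_{\max}$, giving the upper bound. For the lower bound, $\lambda_- = \gamma_S/\lambda_+ \ge \gamma_S/(\gamma_A + \gamma_S)$, which is decreasing in $\gamma_A$ and increasing in $\gamma_S$, so it is minimized at $(\gamma_A,\gamma_S) = (\gamma^A_{\max}, \gamma^S_{\min})$, yielding $\gamma^S_{\min}/(\gamma^A_{\max} + \gamma^S_{\min})$. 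Combining with the $y = 0$ branch of Theorem \ref{Th6}, which produces real eigenvalues inside $[\gamma^A_{\min}, \gamma^A_{\max}]$ and hence bounded below by $\gamma^A_{\min}$, gives the stated synthetic bound.

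The main obstacle is not algebraic but conceptual: the quadratic has coefficients depending on the eigenvector (through $\gamma_A$ and $\gamma_S$), so care is needed to phrase the ``characterization'' in a meaningful way and then convert eigenvector-dependent quantities into uniform bounds via the monotonicity of $\gamma_S/(\gamma_A+\gamma_S)$. A minor point to verify is that $R^T y \ne 0$ whenever $y \ne 0$, which follows from $R$ having full row rank; this is needed for Lemma \ref{Le1} to be applicable.
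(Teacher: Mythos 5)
Your argument follows the same route as the paper's: set $z=0$ using $Ky=0$ and $\lambda\neq 0$, eliminate $x$ via $(\tilde A-\lambda I)^{-1}$ (valid because $\lambda\notin[\gamma^A_{\min},\gamma^A_{\max}]$), apply Lemma~\ref{Le1} with the vector $R^Ty$, and land on $\lambda^2-(\gamma_A+\gamma_S)\lambda+\gamma_S=0$; the algebra and sign-checking are correct, as is the observation that $R$ full row rank forces $R^Ty\neq 0$. For the lower bound your phrasing is slightly different: you use Vieta's formula $\lambda_-\lambda_+=\gamma_S$ together with $\lambda_+\le\gamma_A+\gamma_S$ to get $\lambda_-\ge\gamma_S/(\gamma_A+\gamma_S)$ and then minimize by monotonicity, whereas the paper rationalizes $\lambda_2$ and appeals to a monotonicity claim (with a $\gamma_A\ge 1$ caveat) before stating the same bound. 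The two computations are equivalent, but your Vieta route is cleaner and avoids that caveat. You also make explicit that the $\gamma^A_{\min}$ term in the stated lower bound arises from the $y=0$ branch, which the paper only establishes implicitly in the proof of the preceding theorem; that extra sentence is a small but genuine improvement in completeness.
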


\begin{proof}
From \eqref{Eq32}, we have 
\begin{equation}\label{Eq54}
x =(\tilde{A}-\lambda I)^{-1}(\lambda -1)R^Ty.
\end{equation}
Inserting $x$ into the equation \eqref{Eq33} yields
\begin{equation}\label{Eq55}
R(\tilde{A}-\lambda I)^{-1}(1-\lambda)R^Ty=\lambda y.
\end{equation}
Multiplying the above equation by $\dfrac{y^T}{y^Ty}$ and using Lemma \ref{Le1}, we derive
\begin{equation}\label{Eq56}
\lambda^2-(\gamma_A+\gamma_{S})\lambda+\gamma_{S}=0,
\end{equation}
The solutions of equation \eqref{Eq56} are
\begin{equation*}\label{Eq57}
\lambda_{1,2}=\frac{\gamma_A+\gamma_{S}\pm \sqrt{(\gamma_A+\gamma_{S})^2-4\gamma_{S}}}{2}.
\end{equation*}
It is easy to see that 
\begin{equation*}\label{Eq58}
\lambda_{1}=\frac{\gamma_A+\gamma_{S}+ \sqrt{(\gamma_A+\gamma_{S})^2-4\gamma_{S}}}{2} \leq \gamma_A+\gamma_{S} \leq \gamma_{\max}^A+\gamma^S_{\max}.
\end{equation*}
It is not hard to find that the smallest eigenvalue $\lambda_2$ is a decreasing function with respect to $\gamma_A$ and it is an increasing  with respect to $\gamma_{S}$ if $\gamma_A \geq 1$. Therefore,  we have 
\begin{align*}\label{Eq59}
\lambda_{2}&=\frac{\gamma_A+\gamma_{S}- \sqrt{(\gamma_A+\gamma_{S})^2-4\gamma_{S}}}{2}\nonumber \\
&=\frac{2\gamma_{S}}{\gamma_A+\gamma_{S}+ \sqrt{(\gamma_A+\gamma_{S})^2-4\gamma_{S}}}
\geq \frac{\gamma^S_{\min}}{\gamma^A_{\max}+\gamma^S_{\min}}.\nonumber
\end{align*}
From the above discussion, we have proved that the real eigenvalues satisfy 
\begin{equation}\label{Eq58-0}
\frac{\gamma^S_{\min}}{\gamma^A_{\max}+\gamma^S_{\min}} \leq \lambda \leq \gamma^A_{\max}+\gamma^S_{\max}.
\end{equation}
\end{proof}

Before developing bound on the real eigenvalues of the preconditioned matrix in the general case we state the following  Lemma.

\begin{Lemma}\label{Le4}
Let $\zeta \in \mathbb{R}$ be either  $0 < \zeta < \min\left\{\gamma_{\min}^A, \dfrac{\gamma_{\min}^S}{\gamma_{\max}^A+\gamma_{\min}^S}\right\}$ or $\zeta \ge \gamma_{\max}^A+\gamma_{\max}^S$.
Then the symmetric matrix
\begin{equation}\label{Eq622}
Z(\zeta) = (1-\zeta )  R (\zeta I - \tilde A)^{-1} R^T + \zeta  I,
\end{equation}
has either all positive  or all negative eigenvalues.
\end{Lemma}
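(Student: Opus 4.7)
The strategy is to control the Rayleigh quotient $y^{T} Z(\zeta) y / \|y\|^{2}$ uniformly in $y \ne 0$ and identify its sign. First I would observe that since $B$ has full row rank and $\widehat A, \widehat S$ are nonsingular, the matrix $R = \widehat S^{-1/2} B \widehat A^{-1/2}$ has full row rank, so $R^{T} y \ne 0$ for every $y \ne 0$. This allows Lemma \ref{Le1} to be applied to $z = R^{T} y$, and since $\zeta \notin [\gamma_{\min}^{A}, \gamma_{\max}^{A}]$ in both cases of the hypothesis, we obtain a scalar $\gamma_{A} \in [\gamma_{\min}^{A}, \gamma_{\max}^{A}]$ (depending on $y$) with
\[
y^{T} R (\zeta I - \tilde A)^{-1} R^{T} y \;=\; -(R^{T} y)^{T}(\tilde A - \zeta I)^{-1}(R^{T} y) \;=\; -\frac{\|R^{T} y\|^{2}}{\gamma_{A} - \zeta}.
\]

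Setting $\gamma_{S} = \|R^{T} y\|^{2}/\|y\|^{2} \in [\gamma_{\min}^{S}, \gamma_{\max}^{S}]$, routine algebra collapses $y^{T} Z(\zeta) y$ into
\[
y^{T} Z(\zeta) y \;=\; -\,\frac{\|y\|^{2}}{\gamma_{A} - \zeta}\, q(\zeta), \qquad q(\zeta) = \zeta^{2} - (\gamma_{A} + \gamma_{S})\zeta + \gamma_{S},
\]
where $q$ is precisely the quadratic of equation \eqref{Eq56}. The key observation is that the root analysis already performed in Theorem \ref{Theorem_complex} furnishes bounds on the two real roots $\zeta_{-} \le \zeta_{+}$ of $q$: specifically $\zeta_{-} \ge \gamma_{\min}^{S}/(\gamma_{\max}^{A}+\gamma_{\min}^{S})$, while $\zeta_{+} = (\gamma_{A} + \gamma_{S}) - \zeta_{-} < \gamma_{\max}^{A} + \gamma_{\max}^{S}$ (the strict inequality following from $\zeta_{+}\zeta_{-} = \gamma_{S} > 0$, which forces $\zeta_{-} > 0$).

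With these ingredients in place the two cases are immediate. In the first range, $\zeta < \gamma_{\min}^{A} \le \gamma_{A}$ gives $\gamma_{A} - \zeta > 0$, while $\zeta < \gamma_{\min}^{S}/(\gamma_{\max}^{A}+\gamma_{\min}^{S}) \le \zeta_{-}$ gives $q(\zeta) > 0$; hence $y^{T} Z(\zeta) y < 0$ for all $y \ne 0$, so $Z(\zeta)$ is negative definite. In the second range, $\zeta \ge \gamma_{\max}^{A} + \gamma_{\max}^{S} > \gamma_{A}$ gives $\gamma_{A} - \zeta < 0$, and $\zeta \ge \gamma_{\max}^{A} + \gamma_{\max}^{S} > \zeta_{+}$ still gives $q(\zeta) > 0$; hence $y^{T} Z(\zeta) y > 0$, so $Z(\zeta)$ is positive definite.

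The main technical point I anticipate is verifying that the bounds on $\zeta_{\pm}$ hold uniformly over every admissible pair $(\gamma_{A}, \gamma_{S})$, not just a fixed one. Fortunately the monotonicity of $\zeta_{-}$ in $\gamma_{A}$ and $\gamma_{S}$ already exploited in Theorem \ref{Theorem_complex} (under the standing assumption $1 \in [\gamma_{\min}^{A}, \gamma_{\max}^{A}]$) ensures that the worst-case value of $\zeta_{-}$ over the admissible box is $\gamma_{\min}^{S}/(\gamma_{\max}^{A}+\gamma_{\min}^{S})$; the corresponding bound for $\zeta_{+}$ is even easier, being a direct consequence of Vieta's formulas.
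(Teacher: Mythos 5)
Your proof is correct and follows essentially the same route as the paper's: multiply $Z(\zeta)$ by the Rayleigh quotient, apply Lemma \ref{Le1} with $z = R^{T}y$ to reduce everything to the quadratic $\zeta^{2} - (\gamma_A+\gamma_S)\zeta + \gamma_S$, and invoke the root bounds already established in Theorem \ref{Theorem_complex}. Your write-up is marginally sharper in that it factors $y^{T}Z(\zeta)y = -\tfrac{\|y\|^{2}}{\gamma_A - \zeta}\,q(\zeta)$ and determines the sign explicitly in each of the two $\zeta$-ranges (negative definite, resp. positive definite), whereas the paper only shows the Rayleigh quotient never vanishes and leaves the passage to definiteness implicit via continuity over the connected unit sphere; you might also add one clause covering the case where $q$ has negative discriminant (then $q>0$ everywhere and the conclusion is immediate).
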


\begin{proof}
Let $w$ be a nonzero vector. Multiplying \eqref{Eq622} by
$\dfrac{w^T}{w^Tw}$ on the left and by
$w$ on the right and applying Lemma \ref{Le1}, since
$\zeta I -  \tilde A$ has all positive or all negative eigenvalues, yields
\begin{eqnarray}\label{Eq63}
\frac{w^T Z w}{w^T w} &=& (1-\zeta) \frac{w^T R (\zeta I - \tilde A)^{-1} R^T w}{w^T w}+\zeta   =
\quad (\text{setting} \ z = R^T w) \nonumber \\
&=& (1-\zeta) \frac{z^T (\zeta I - \tilde A)^{-1} z}{z^T z} \frac{w^T R R^T w}{w^T w} + \zeta   = \nonumber \\
        & = &   \frac{1-\zeta}{\zeta - \gamma_A} \gamma_S + \zeta.
\end{eqnarray}
        The Rayleigh quotient associated to $Z(\zeta)$, namely the function $h(\zeta) = \dfrac{1-\zeta}{\zeta - \gamma_A} \gamma_S + \zeta$ can not be zero under the hypotheses on $\zeta$.
In fact,
\begin{equation}\label{Eq64}
h(\zeta) = 0 \Longrightarrow \zeta^2 - (\gamma_A+\gamma_S) \zeta +\gamma_S  = 0,
\end{equation}
        and applying (\ref{Eq58-0}) we obtain the desired result.
\end{proof}

The next theorem provides bounds on the real eigenvalues of the preconditioned matrix $\mathcal{A}\bar{\mathcal{Q}}^{-1}$ in the general case.

\begin{Theorem}
	\label{Theorem_real}
Let $\lambda \in \mathbb{R}$ and $\lambda \notin 
	\left[\min\left\{\gamma_{\min}^A, \dfrac{\gamma_{\min}^S}{\gamma_{\max}^A+\gamma_{\min}^S}\right\}, \gamma_{\max}^A+\gamma_{\max}^S\right ]$.
	Then the remaining real
	eigenvalues of the preconditioned matrix $\mathcal{A}\bar{\mathcal{Q}}^{-1}$
satisfy 
	\begin {equation}
		\label{CubicPoly}
\lambda^3-(\gamma_A+\gamma_{S}+\gamma_{X})\lambda^2+(\gamma_{S}+\gamma_{X}+
	\gamma_A\gamma_{X})\lambda-\gamma_A\gamma_{X}=  0.
\end{equation}
Moreover the following synthetic bound holds:
	\begin {equation}  
		\label{boundsKynot0}
	\min \left \{\dfrac{\gamma_{\min}^S}{\gamma_{\max}^A+\gamma_{\min}^S},
\frac{\gamma_{\min}^A \gamma_{\min}^X}{\gamma_{\min}^X+ \gamma^S_{\max} + \gamma_{\min}^A\gamma_{\min}^X}\right\} \le \lambda  \le
\gamma_{\max}^A+\gamma_{\max}^{S}+\gamma_{\max}^{X}.
\end{equation}
\end{Theorem}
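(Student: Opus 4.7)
My plan is to derive the cubic \eqref{CubicPoly} by eliminating $x$ and then $y$ from the eigenvalue system \eqref{Eq32}--\eqref{Eq34}, applying Lemma \ref{Le1} at each elimination, and then to bound its real roots via Lemma \ref{Le2}. The hypothesis on $\lambda$ plays a double role: it guarantees $\lambda \notin [\gamma_{\min}^A,\gamma_{\max}^A]$, so $\tilde A - \lambda I$ is invertible, and by Lemma \ref{Le4} ensures that the symmetric matrix $Z(\lambda)$ of \eqref{Eq622} is positive or negative definite, hence invertible. The assumption $Ky \ne 0$ automatically forces $\lambda \ne 0$.

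First I would solve \eqref{Eq32} for $x = (\lambda-1)(\tilde A - \lambda I)^{-1}R^T y$ and substitute it into \eqref{Eq33}, obtaining $Z(\lambda) y = (\lambda-1) K^T z$, hence $y = (\lambda-1) Z(\lambda)^{-1} K^T z$. Plugging this into \eqref{Eq34} and multiplying on the left by $z^T/(z^T z)$ gives the scalar identity $(\lambda - 1)\, z^T K Z(\lambda)^{-1} K^T z/(z^T z) = \lambda$. Applying Lemma \ref{Le1} to the symmetric invertible matrix $Z(\lambda)$ with vector $\nu = K^T z$, one obtains $\nu^T Z(\lambda)^{-1}\nu/(\nu^T\nu) = 1/\eta$, where $\eta$ is the Rayleigh quotient of $Z(\lambda)$ at some $s\ne 0$; and the computation \eqref{Eq63} already performed in the proof of Lemma \ref{Le4} identifies $\eta = [\lambda^2-(\gamma_A+\gamma_S)\lambda+\gamma_S]/(\lambda-\gamma_A)$ for suitable $\gamma_A \in [\gamma_{\min}^A,\gamma_{\max}^A]$ and $\gamma_S \in [\gamma_{\min}^S,\gamma_{\max}^S]$. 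Since $\nu^T\nu/(z^Tz) = z^T K K^T z/(z^T z) = \gamma_X$, the identity reduces to $\lambda\,\eta = (\lambda-1)\gamma_X$, which after clearing the denominator becomes $\lambda[\lambda^2-(\gamma_A+\gamma_S)\lambda+\gamma_S]=(\lambda-1)(\lambda-\gamma_A)\gamma_X$, and this expands directly into \eqref{CubicPoly}.

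For the synthetic bound \eqref{boundsKynot0}, all three coefficients $a_1=\gamma_A+\gamma_S+\gamma_X$, $a_2=\gamma_S+\gamma_X+\gamma_A\gamma_X$, $a_3=\gamma_A\gamma_X$ of the cubic are strictly positive, so Lemma \ref{Le2} applies and every real root satisfies $\min\{a_1, a_3/a_2\} \le \lambda \le \max\{a_1, a_3/a_2\}$. Since $a_3/a_2\le 1$ and $a_1 \le \gamma_{\max}^A+\gamma_{\max}^S+\gamma_{\max}^X$, with the latter no smaller than $1$ by the standing assumption $1\in[\gamma_{\min}^A,\gamma_{\max}^A]$, the upper bound follows. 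For the lower bound, a routine monotonicity check shows that $a_3/a_2$ attains its minimum over the admissible ranges at $(\gamma_{\min}^A,\gamma_{\max}^S,\gamma_{\min}^X)$, yielding $\gamma_{\min}^A\gamma_{\min}^X/(\gamma_{\max}^S+\gamma_{\min}^X+\gamma_{\min}^A\gamma_{\min}^X)$; taking the minimum of this with the quadratic bound $\gamma_{\min}^S/(\gamma_{\max}^A+\gamma_{\min}^S)$ from Theorem \ref{Theorem_complex} (which covers the complementary case $Ky=0$) produces \eqref{boundsKynot0}. The main obstacle is the second application of Lemma \ref{Le1}, now to $Z(\lambda)$: it relies on Lemma \ref{Le4} for invertibility and on the Rayleigh-quotient representation \eqref{Eq63} already obtained in the proof of that lemma. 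A minor technicality is that the derivation formally degenerates at $\lambda = 1$ and $\lambda = \gamma_A$, both of which are automatically excluded by the hypothesis, since $1$ and any value in $[\gamma_{\min}^A,\gamma_{\max}^A]$ lie inside the forbidden interval.
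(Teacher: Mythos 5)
Your proof is correct and follows essentially the same route as the paper: eliminate $x$ to obtain $Z(\lambda)y=(\lambda-1)K^Tz$, use Lemma~\ref{Le4} to invert $Z(\lambda)$, eliminate $y$ from \eqref{Eq34}, reduce the resulting scalar relation via the Rayleigh-quotient representation \eqref{Eq63} to the cubic \eqref{CubicPoly}, and then apply Lemma~\ref{Le2} together with a monotonicity argument to get \eqref{boundsKynot0}. One cosmetic difference: the paper does not re-invoke Lemma~\ref{Le1} (which is stated specifically for $\tilde A-\lambda I$) to treat $w^TZ(\lambda)^{-1}w/w^Tw$, but instead substitutes $u=Z(\lambda)^{-1/2}w$ and reads off the reciprocal Rayleigh quotient directly from \eqref{Eq63}; your appeal to Lemma~\ref{Le1} for $Z(\lambda)$ relies on the same general fact, but strictly speaking goes slightly beyond the lemma's literal statement. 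Also, for the upper bound you do not actually need the observation that $\gamma^A_{\max}+\gamma^S_{\max}+\gamma^X_{\max}\ge 1$: since $a_3/a_2<1$ while $\gamma_A\gamma_X$ is already a summand of $a_1a_2$, one always has $a_3/a_2<a_1$, so $\max\{a_1,a_3/a_2\}=a_1\le \gamma^A_{\max}+\gamma^S_{\max}+\gamma^X_{\max}$ directly, as the paper does.
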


\begin{proof}
	The equation  \eqref{Eq32} can be written as
\begin{equation}\label{Eq65}
x = (1-\lambda) (\lambda I -\tilde{A})^{-1} R^T y.
\end{equation}
When we insert this into the second equation in \eqref{Eq33}, we obtain
\begin{equation}\label{Eq66}
Z(\lambda)y = (\lambda-1) K^T z,
\end{equation}
where $Z(\lambda)=(1-\lambda) R(\lambda I -\tilde{A})^{-1} R^T +\lambda I$.
        The hypotheses on $\lambda$ allow to use
                Lemma \ref{Le4} which guarantee  the matrix  $Z(\lambda)$ is either SPD or symmetric negative definite. 
		Hence,  obtaining
$y = (\lambda - 1)Z(\lambda)^{-1} K^T z$ from the previous equation and substituting in \eqref{Eq34} yields
\begin{equation}\label{Eq67}
\left[K(\lambda-1)Z(\lambda)^{-1} K ^T - \lambda I \right]z = 0.
\end{equation}
Premultiplying by $z^T$on the left and dividing by $z^Tz$ yields
\begin{equation}\label{Eq68}
\frac{z^TK (\lambda-1) Z(\lambda)^{-1} K ^T z}{z^Tz}-\lambda=0.
\end{equation}
Setting $w=K^Tz$, we can obtain
\begin{equation}\label{Eq68}
	(\lambda-1) \frac{w^TZ(\lambda)^{-1}w}{w^Tw} \frac{z^T K K^T z}{z^T z}-\lambda =0.
\end{equation}
	Denoted the  vector $u = Z(\lambda)^{-1/2}w$, the equation (\ref{Eq68}) becomes
\begin{equation}\label{Eq68_1}
	(\lambda-1) \frac{u^T u}{u^T Z(\lambda) u} \frac{z^T K K^T z}{z^T z}-\lambda =0.
\end{equation}
	Using now the relation \eqref{Eq63} in Lemma \ref{Le4}, we get
\begin{equation}
  \frac{\lambda-1}{\frac{1-\lambda}{\lambda - \gamma_A} \gamma_S + \lambda} \gamma_X - \lambda = 0.
\end{equation}
After simple algebra we are left with the following polynomial cubic equation
\begin{equation}\label{Eq61}
q(\lambda) \equiv \lambda^3-(\gamma_A+\gamma_{S}+\gamma_{X})\lambda^2+(\gamma_{S}+\gamma_{X}+
\gamma_A\gamma_{X})\lambda-\gamma_A\gamma_{X}=0.
\end{equation}
	Applying Lemma \ref{Le2} to this cubic polynomial we have
	\[ a = \frac{\gamma_A  \gamma_{X}}{\gamma_{X}+ \gamma_S + \gamma_A \gamma_{X}}, \qquad
	b = \gamma_A+\gamma_{S}+\gamma_{X} .\]
	In this case it is easily verified that $a < b$ from which we have that
	$ a < \lambda < b$ and	the statement of the theorem 
	results by observing that the lower bound is an increasing function  of both $\gamma_A$ and  $\gamma_K$
	and decreasing on $\gamma_S$.
\end{proof}


\smallskip

\noindent
\textbf{Check of the bounds in Theorems \ref{Theorem_complex} and \ref{Theorem_real}}.
 Figure \ref{fig3} displays in depth the eigenvalue distribution of preconditioned matrix $\bar{\mathcal{Q}}^{-1}\mathcal{A}$. 
%
\begin{figure}[H]
    \centering
    \includegraphics[scale=0.6]{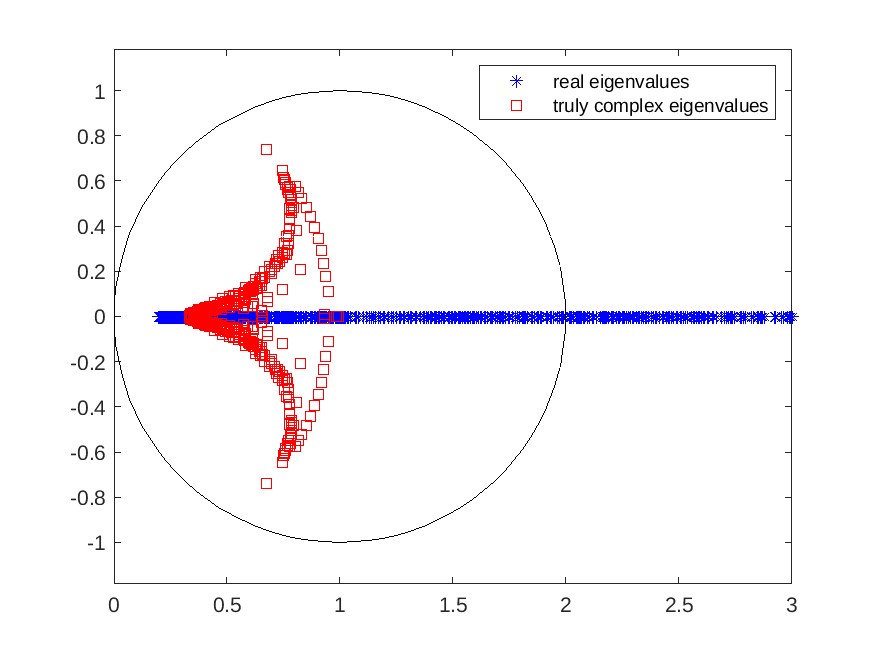}
    \caption{\small{Eigenvalue distribution of preconditioned matrix $\bar{\mathcal{Q}}^{-1}\mathcal{A}$ for Example  \ref{Ex2} with $p=16$.}}
    \label{fig3}
\end{figure}
\begin {itemize}
\item The complex eigenvalues of the preconditioned matrix $\bar{\mathcal{Q}}^{-1}\mathcal{A}$ fall in the open circle with center (1,0) and radius 1; 
\item Regarding the real eigenvalues, the results are summarized in the following table:

	\begin{tabular} {llll}
$\min\{\lambda, \lambda \in \R\} $&$\max\{\lambda, \lambda \in \R\} $ & Lower bound (\ref{boundsKynot0}) & Upper bound (\ref{boundsKynot0}) \\
\hline
		0.1982 & 3.0019 & 0.1342 & 6.2110
	\end{tabular}
\end {itemize}


In the next section, we will perform a more accurate eigenvalues analysis of the preconditioned matrix with the $\bar{\mathcal{Q}}$ preconditioner, under additional hypotheses. 
\section{Further characterization of real eigenvalues}\label{Fsec}
We will now consider a simplified preconditioner in which the only approximation is provided by $\widehat A \ne A$, whereas
$\widehat S=B\widehat A^{-1}B^T \equiv \tilde{S}$ and $\widehat X=C\widehat S^{-1}C^T \equiv \tilde{X}$. 
Note that $RR^T=I_m$ and $KK^T=I_l$. 

\begin{Theorem}\label{Th6}
	Let $\widehat S \equiv \tilde S$ and $\widehat X \equiv \tilde X$. Then
any real eigenvalue $\lambda$ of $\mathcal{A}\bar{\mathcal{Q}}^{-1}$ is bounded by
	\begin{equation*}
		\label{Statement_Th6}
 \min\left\{\lambda^+(\gamma^A_{\min}),\gamma^A_{\min}, \frac{1}{\gamma^A_{\max}+1}\right\} \le \lambda \le \max\{\lambda^+ (\gamma^A_{\max}), \gamma^A_{\max} +1\},
	\end{equation*}
where $\lambda^+(.)$ is the (unique) positive root of the equation
\[ \lambda^3 - (2 + \gamma_A) \lambda^2 + (2 + \gamma_A) \lambda - \gamma_A = 0.\]
Moreover,  the following more synthetic bound holds:
\begin{equation}
	\label{bound8}
\dfrac{\gamma^A_{\min}}{2} \le \lambda \le  \gamma^A_{\max}+1.
\end{equation}
\end{Theorem}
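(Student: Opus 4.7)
The plan is to specialise the eigenvalue characterisations of Theorems~\ref{Theorem_complex} and~\ref{Theorem_real} to the present setting $\widehat S = \tilde S$, $\widehat X = \tilde X$. Under this assumption the scaled blocks $R = \widehat S^{-1/2} B \widehat A^{-1/2}$ and $K = \widehat X^{-1/2} C \widehat S^{-1/2}$ satisfy $RR^T = I_m$ and $KK^T = I_l$, so for every nonzero $y$ and $z$ the Rayleigh quotients
$\gamma_S = y^* RR^T y / y^* y$ and $\gamma_X = z^* KK^T z / z^* z$ collapse to the constant value~$1$. Consequently the two previous theorems become fully explicit, and I would combine them through the usual trichotomy on the eigenvector.

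If $y = 0$, equation~\eqref{Eq32} forces $\tilde A x = \lambda x$, hence $\lambda \in [\gamma^A_{\min}, \gamma^A_{\max}]$. If $y \ne 0$ and $Ky = 0$, Theorem~\ref{Theorem_complex} reduces to $\lambda^2 - (\gamma_A + 1)\lambda + 1 = 0$; its real roots, when they exist, are mutually reciprocal, bounded above by $\gamma^A_{\max} + 1$ and below by $1/(\gamma^A_{\max} + 1)$. If $Ky \ne 0$, Theorem~\ref{Theorem_real} yields the cubic
\[
p_{\gamma_A}(\lambda) \equiv \lambda^3 - (2 + \gamma_A)\lambda^2 + (2 + \gamma_A)\lambda - \gamma_A = 0,
\]
which is precisely the defining equation for $\lambda^+(\gamma_A)$.

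The main technical step, and the one I expect to be the chief obstacle, is to verify that $\lambda^+(\gamma_A)$ is a well-defined, strictly increasing function of $\gamma_A > 0$. I would do this by separating variables: the cubic is equivalent to $\gamma_A = f(\lambda)$ with
\[
f(\lambda) = \frac{\lambda(\lambda^2 - 2\lambda + 2)}{\lambda^2 - \lambda + 1}, \qquad f'(\lambda) = \frac{(\lambda - 1)^2(\lambda^2 + 2)}{(\lambda^2 - \lambda + 1)^2} \ge 0.
\]
Together with $f(0) = 0$, $f(1) = 1$, and $f(\lambda) \to \infty$ as $\lambda \to \infty$, this identifies $f$ as a strictly increasing bijection on $[0, \infty)$, so that for each $\gamma_A > 0$ the cubic admits a unique positive real root $\lambda^+(\gamma_A)$, strictly increasing in $\gamma_A$ and equal to $1$ at $\gamma_A = 1$. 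The Case-(c) contribution therefore lies in $[\lambda^+(\gamma^A_{\min}), \lambda^+(\gamma^A_{\max})]$, and combining the three ranges reproduces the first displayed bound.

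For the synthetic inequality \eqref{bound8}, a direct substitution gives
\[
p_{\gamma_A}(\gamma_A + 1) = 1 > 0, \qquad p_{\gamma_A}(\gamma_A/2) = -\gamma_A^3/8 < 0,
\]
so that the sign analysis around the unique positive root yields $\gamma_A/2 < \lambda^+(\gamma_A) < \gamma_A + 1$ for every $\gamma_A > 0$. The upper bound $\gamma^A_{\max} + 1$ then dominates both $\gamma^A_{\max}$ and $\lambda^+(\gamma^A_{\max})$, while the lower bound $\gamma^A_{\min}/2$ dominates $\lambda^+(\gamma^A_{\min})$ and trivially $\gamma^A_{\min}$; the residual comparison with $1/(\gamma^A_{\max}+1)$ is an elementary check, handled using the standing assumption $1 \in [\gamma^A_{\min}, \gamma^A_{\max}]$.
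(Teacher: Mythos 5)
Your proposal is correct in its overall structure and, at the crucial analytic step, is genuinely cleaner than the paper. The paper establishes uniqueness and monotonicity of the positive root $\lambda^+(\gamma_A)$ of the cubic by fixing $\gamma_A$ and analysing $p(\lambda;\gamma_A)$ as a function of $\lambda$ (checking the sign at $0$, the slope at $0$, and the value at the interior critical point, with a case split on $\gamma_A \lessgtr 1$), and then separately noting that $p$ is decreasing in $\gamma_A$. Your separation-of-variables argument — writing the cubic as $\gamma_A = f(\lambda)$ with $f(\lambda) = \lambda(\lambda^2-2\lambda+2)/(\lambda^2-\lambda+1)$ and computing $f'(\lambda) = (\lambda-1)^2(\lambda^2+2)/(\lambda^2-\lambda+1)^2 \ge 0$ — delivers uniqueness of the positive root, its monotonicity in $\gamma_A$, and the anchor $\lambda^+(1)=1$ in one stroke, and is a nicer way to organise the proof. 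The explicit trichotomy on $y=0$, $Ky=0$, $Ky\ne 0$ corresponds to what the paper does implicitly by invoking Theorems~\ref{Theorem_complex} and~\ref{Theorem_real}, and your evaluations $p_{\gamma_A}(\gamma_A/2)=-\gamma_A^3/8$ and $p_{\gamma_A}(\gamma_A+1)=1$ are exactly those in the paper.

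One caveat, which you inherit from the paper itself: the ``elementary check'' that $\gamma^A_{\min}/2 \le 1/(\gamma^A_{\max}+1)$, which you defer, does \emph{not} follow from the standing assumption $1 \in [\gamma^A_{\min},\gamma^A_{\max}]$. For instance $\gamma^A_{\min}=0.5$, $\gamma^A_{\max}=10$ gives $\gamma^A_{\min}/2 = 0.25 > 1/11$. The paper asserts this inequality in its final sentence without justification, so you have faithfully reproduced its argument; but as written the synthetic bound~\eqref{bound8} is not a formal consequence of the first displayed bound unless one imposes something stronger, such as $\gamma^A_{\min}(\gamma^A_{\max}+1) \le 2$. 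You should flag this as a hypothesis rather than describe it as handled.
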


\begin{proof}
	For this simplified preconditioner we have $\gamma_S \equiv 1$ and $\gamma_X \equiv 1$.  In this case the equation (\ref{CubicPoly}) becomes
\begin{equation}\label{Eq71}
p(\lambda; \gamma_A) \equiv \lambda^3 - (2 + \gamma_A) \lambda^2 + (2 + \gamma_A) \lambda - \gamma_A = 0,
\end{equation}
for all real 
	\begin{equation}
		\label{conditions}
		\lambda \notin 
	\left[\min\left\{\gamma_{\min}^A, \dfrac{1}{1 + \gamma_{\max}^A}\right\}, 1+\gamma_{\max}^A\right ].
	\end{equation}
The cubic polynomial equation \eqref{Eq71} can be written as
\begin{equation}\label{Eq72}
p(\lambda; \gamma_A) \equiv \lambda \Big((\lambda-1)^2+1\Big) - \gamma_A (\lambda^2 - \lambda + 1) =  0,
\end{equation}
showing that the function $g(x) \equiv p(\lambda; x)$ is decreasing for each $x \ge 0$ and therefore
the position of the  largest 	positive root of \eqref{Eq72} is increasing.
Moreover it is easy to show that
for every $\gamma_A > 0$, there is a unique positive root to the equation $p(\lambda; \gamma_A)  = 0$. In fact
	\[ p(0; \gamma_A) = -\gamma_A < 0, \quad p'(0; \gamma_A) = 2 + \gamma_A, \qquad p'(\tilde \lambda; \gamma_A) = 0, 
		\quad \tilde  \lambda= \frac{\gamma_A+2 - \sqrt{\gamma_A^2+\gamma_A - 2}}{3},\]
	so that if $\gamma_A < 1$, the polynomial $p$ is increasing for $\lambda > 0$ and it takes a local maximum in $\tilde \lambda$ if $\gamma_A > 1$
		in which, however, $p(\tilde \lambda; \gamma_A) <  p(\tilde \lambda; 1) = 0$.
		Combining all these facts we finally have
		\[ \lambda^+(\gamma^A_{\min}) \le \lambda \le \lambda^+ (\gamma^A_{\max}),\]
		where $\lambda^+(\gamma_A)$ refers to the unique positive solution of $p(\lambda; \gamma_A) = 0$, and the thesis holds
		by observing that $p(\gamma^A_{\min}; \gamma^A_{\min}) > 0 \Longrightarrow \lambda^+(\gamma^A_{\min}) < \gamma^A_{\min}$ and
		$p(\gamma^A_{\max}; \gamma^A_{\max}) < 0 \Longrightarrow \lambda^+(\gamma^A_{\max}) > \gamma^A_{\max}$.

\noindent
		Also the second part of the theorem holds
		since $p\left(\dfrac{\gamma^A_{\min}}{2}; \gamma^A_{\min}\right) = -\dfrac{{(\gamma^A_{\min})}^3}{8} < 0$, then
		$\lambda^+(\gamma^A_{\min}) > \dfrac{\gamma^A_{\min}}{2}$. Moreover, from 
		$p\left(\gamma^A_{\max}+1; \gamma^A_{\max}\right) = 1 > 0$, we have       
		that $\lambda^+(\gamma^A_{\max}) < \gamma^A_{\max}+1$. 
		Combining this with (\ref{conditions}) and observing that $\dfrac{\gamma^A_{\min}}{2} < \dfrac{1}{\gamma^A_{\max} + 1}$,
		we conclude the proof.

\end{proof}

\begin{Remark}
	Note that we could have applied directly Lemma \ref{Le2} to equation (\ref{Eq71}),
	obtaining the following bounds
	\[ \frac{\gamma_{\min}^A}{2 + \gamma_{\min}^A} \le \lambda \le \gamma_{\max}^A + 2,\]
	which are looser than those proved in Theorem \ref{Th6}.
\end{Remark}

\smallskip

\noindent
\textbf{Check of the bounds in Theorem \ref{Th6}}.
The following example is given to assess the theoretical results developed in Theorem \ref{Th6}.
\begin{Example}\label{Ex3}
Consider the linear system \eqref{Eq1} with the block matrices are randomly generated by the following MATLAB code:
\begin{lstlisting}
n = 100; m = 80; l = 60;
z = 1+10*rand(1); w = z*rand(n,1);  w = 0.1+sort(w);  w(1:10) = w(1);
A = diag(w); B = rand(m,n); C = rand(l,m);
\end{lstlisting}
\end{Example}

\noindent
In this example, matrix $A$ is diagonal with a random eigenvalue distribution in $[0.1, 11]$ and $\widehat A = I$. 
\begin{figure}[H]
    \centering
    \includegraphics[scale=0.5]{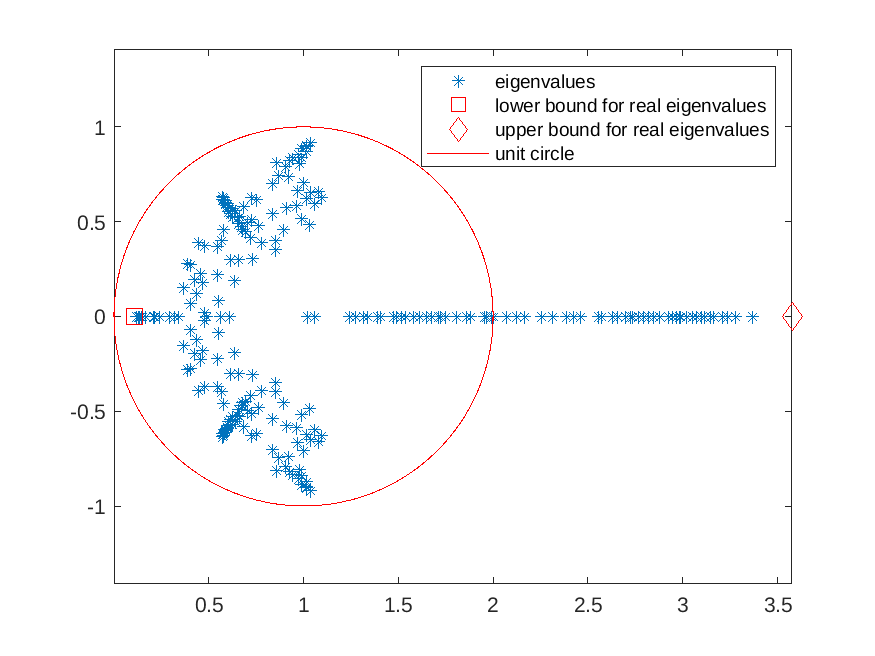}
	\hspace{-9mm}
    \includegraphics[scale=0.5]{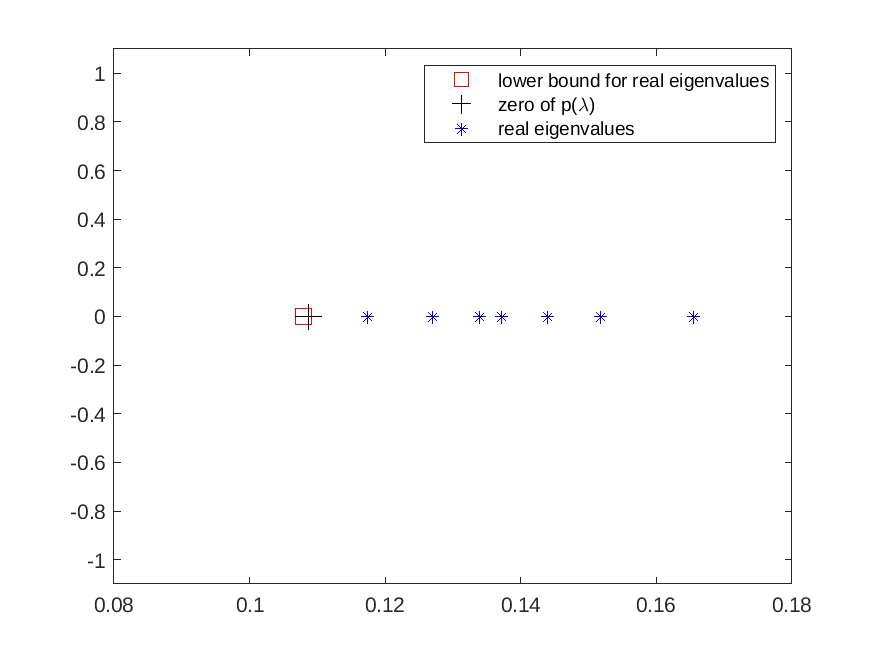}
	\caption{\small{Eigenvalue distribution of the preconditioned matrix $\bar{\mathcal{Q}}^{-1}\mathcal{A}$ for Example \ref{Ex3}. Left panel: real eigenvalues -- blue asterisks, complex eigenvalues -- red circles, upper bound by} (\ref{bound8}) -- red diamond. Right panel: smallest eigenvalues compared with 
	the two lower bounds of Theorem  \ref{Th6}.}
    \label{fig4}
\end{figure}

\noindent
In Figure \ref{fig4} (left) we show the whole spectrum of the preconditioned matrix $\bar{\mathcal{Q}}^{-1}\mathcal{A}$ together with the bounds for the real eigenvalues. In Figure  \ref{fig4} (right) 
a zoom of the smallest (real) eigenvalues is provided showing that both the lower bounds, namely
$\gamma_{\min}^A$ (red box) and $\lambda^+$ (black plus)  are smaller, yet very close, than the smallest real eigenvalue
of the preconditioned matrix. The results of this experiment as well as the observation of the figures
point out that:
\begin {itemize}
\item The complex eigenvalues of the related preconditioned matrix are located in a circle centered at (1, 0) with radius 1; 
\item The real eigenvalues lie in the real interval $[0.1024, 3.373]$; 
\item Here $\dfrac{\gamma^A_{\min}}{2}  = 0.1003, \ \lambda^{+} = 0.1008$  and   $\gamma^A_{\max}+1 = 3.552$
\end {itemize}
We can appreciate the closeness of the bounds to the endpoints of the real eigenvalue interval.
\section{Conclusions}\label{Esec}
In this work, we have considered a number of \textit{exact} block preconditioners, developing
the spectral distribution of the corresponding preconditioned matrices, for a class of double saddle point problems. 
Some numerical experiments are performed, which  show the good behavior of the preconditioned FGMRES method using an  inexact counterpart of these
preconditioner, in comparison with other preconditioners from the literature.

We have then concentrated on the inexact variants of a specific block triangular preconditioner, performing a complete
spectral analysis and relating the eigenvalue distribution of the preconditioned matrix with the extremal eigenvalues
of the (symmetric and positive definite) preconditioned (1,1) block and the Schur complement matrices.
Numerical tests are reported which confirm the validity of the developed theoretical bounds. 

Future work is aimed at generalizing this work to provide the eigenvalue distribution of 
more general double saddle-point matrices, in particular those with nonzero
$(2,2)$ and $(3,3)$ blocks, and to test them on a wide number of realistic applications, such as, e.g.,  coupled poromechanical models  \cite{Frigo-et-al},
and the coupled Stokes-Darcy  equation \cite{BeikBenzi2022}.


\end{document}